\newcommand{\Exp}{\mbox{Exp}}
\newcommand{\calA}{\mathcal{A}}
\newcommand{\calT}{\mathcal{T}}
\newcommand{\calE}{\mathcal{E}}
\newcommand{\calB}{\mathcal{B}}
\newcommand{\calS}{\mathcal{S}}
\newcommand{\calF}{\mathcal{F}}
\newcommand{\Q}{\mathbb{Q}}
\newcommand{\R}{\mathbb{R}}
\newcommand{\D}{D}
\newcommand{\On}{\mbox{O}}
\newcommand{\frakF}{\mathfrak{F}}
\newcommand{\metneg}{\mbox{MET}^{<0}}
\newcommand{\metnc}{\mbox{MET}^{\mathcal{NC}}}
\newcommand{\sDiff}{\mathbf{DIFF}}
\newcommand{\Diff}{\mbox{Diff}}
\newcommand{\Top}{\mbox{Top}}
\newcommand{\sTop}{\mathbf{TOP}}
\newcommand{\TDiffn}[2]{\frac{\Top_0(#1)}{\Diff_0(#2)}}
\newcommand{\sTDiffn}[2]{\frac{\sTop_0(#1)}{\sDiff_0(#2)}}
\newcommand{\TO}[2]{\frac{\Top(#1)}{\mbox{O}(#2)}}
\newcommand{\beq}{\begin{equation*}}
\newcommand{\eeq}{\end{equation*}}
\theoremstyle{definition}
\newtheorem{theorem}{Theorem}
\newtheorem*{theorem*}{Theorem}
\newtheorem{lemma}[theorem]{Lemma}
\newtheorem{propo}[theorem]{Proposition}
\newtheorem{coro}{Corollary}[theorem]
\newtheorem{rmk}{Remark}
\title{\Large\bf Negatively curved bundles in the Igusa stable
range
%\footnote{VERSION SUBMITTED TO GEOMETRY AND TOPOLOGY ON 20.10.2018}
}
\author{Mauricio Bustamante \and Francis Thomas Farrell \and Yi Jiang}
\newcommand{\Addresses}{{% additional braces for segregating \footnotesize
  \bigskip
  \footnotesize
\noindent\textsc{Mauricio Bustamante}\par\nopagebreak\textsc{Institut f\"ur Mathematik, Universit\"at Augsburg}\par\nopagebreak
 \texttt{EMAIL: Mauricio.BustamanteLondono@math.uni-augsburg.de}\\
\textsc{Francis Thomas Farrell}\par\nopagebreak\textsc{Yau Mathematical Sciences Center, Tsinghua University, Beijing, China}\par\nopagebreak
 \texttt{EMAIL: farrell@math.tsinghua.edu.cn}\\
\textsc{Yi Jiang}\par\nopagebreak\textsc{Yau Mathematical Sciences Center, Tsinghua University, Beijing, China}\par\nopagebreak
 \texttt{EMAIL: yjiang117@mail.tsinghua.edu.cn}
 }}
\date{}
\begin{document}
\maketitle
\begin{abstract}
We use classical results in 
smoothing theory to extract information about
the rational homotopy groups of the space of  Riemannian metrics without conjugate points
on a high dimensional manifold with hyperbolic fundamental group. As a consequence, we show that spaces of negatively curved Riemannian metrics have in general  nontrivial rational homotopy groups. We also show that
smooth $M$-bundles over spheres equipped with fiberwise negatively 
curved metrics 
represent elements of finite order in
the homotopy groups $\pi_i B\Diff(M)$ of the classifying space 
for smooth $M$-bundles, provided $i\ll\mbox{dim } M$.
\end{abstract}
\section{Introduction}
Let $M$ be a closed smooth manifold. A negatively curved bundle with fiber $M$ is a smooth $M$-bundle 
$E\to B$ whose fibers are endowed with continuously varying 
Riemannian metrics of everywhere negative sectional curvature.
This notion has been established in $\cite{FOGAFA}$, where a
theory for negatively curved fiber bundles is developed, in the
sense that there is a space $\calT^{<0}(M)$ with the property
that equivalence classes of fiber-homotopically trivial negatively
curved bundles over a paracompact space $B$ are in bijective correspondence with
homotopy classes of maps $B\to\calT^{<0}(M)$. Here two negatively
curved $M$-bundles $E_1\to B$ and $E_2\to B$ are equivalent if
there exists a negatively curved $M$-bundle $\mathcal{E}$ over
$B\times [0,1]$ such that $\mathcal{E}$
restricted to $B\times\{i\}$ is fiberwise isometric to $E_i$,
$i=1, 2$ (see \cite[p.1399]{FOGAFA}).
From a purely topological point of view, fiber homotopically trivial
smooth bundles over $B$ with fiber a negatively curved manifold
$M$ are classified, up to bundle equivalence, by homotopy classes of
maps $B\to B\Diff_0(M)$, where $\Diff_0(M)$ is the group of all
diffeomorphisms of $M$ which are homotopic to the identity on $M$.
Thus there is a natural ``forgetful'' map
\beq
\frakF:\calT^{<0}(M)\to B\Diff_0(M).
\eeq
One then wonders how much these two bundle theories differ, and this is
the theme of this paper. 
This question has been addressed
already by Farrell and Ontaneda in \cite{FOAnnals,FO10,FOGAFA}. A remarkable
observation is that the homotopy fiber of the forgetful map 
$\frakF$ can be identified with the space $\metneg(M)$ of 
all negatively
curved metrics on $M$, so that we have a 
homotopy fibration
\begin{equation}\label{fibration}
\metneg(M)\to\calT^{<0}(M)\xrightarrow{\frakF} B\Diff_0(M).
\end{equation}
Farrell and Ontaneda \cite{FO10} have shown that the space of 
negatively curved metrics on $M$ is highly
non-connected if the dimension of $M$ is sufficiently large, thus
the two bundle theories are fundamentally different. This difference was captured in their main theorem by elements of finite order
in the homotopy groups of $\metneg(M)$. 
Nevertheless, one could still hope that the two theories were 
``rationally equivalent'' (this kind of phenomenon occurs
for example in
the theory of stable vector
bundles and stable topological $\R^n$-bundles. Indeed, $B\On$ and
$B\Top$ are only rationally equivalent). 
Our main result establishes that these two bundle theories
are inequivalent even if one decides to neglect torsion,
at least in 
a range of dimensions called the Igusa stable range. 
This is done by showing that the rational homotopy groups of 
$\metneg(M)$ are in general nontrivial. 

Let us now assume that $M$ is a closed manifold that
supports a Riemannian metric $g$ \textit{without conjugate points}, that is 
for any geodesic $\gamma$ of $M$ no two points are conjugate along $\gamma$
(e.g. nonpositively curved metrics, or metrics with geodesic
flow of Anosov type). In particular $M$ is an aspherical manifold and for any point $p\in M$ the exponential map ${\rm{exp}}_p:T_pM\to M$ is a (universal) covering map. 
Let $\metnc(M)$ denote the space of all such Riemannian metrics on $M$.
Then given any self-diffeomorphism $f:M\to M$ of $M$,
we define the push-forward metric $g'$ on $M$ to be 
the unique Riemannian metric such that 
$f:(M,g)\to (M,g')$ is an isometry. We denote the
push-forward metric by $f_*g$. This gives rise to an orbit
map $\Phi^g:\Diff_0(M)\to\metnc(M)$, defined by
$\Phi^g(f)=f_*g$, and the corresponding map in homotopy groups
\beq
 \Phi^g_*:\pi_i\Diff_0(M)\to\pi_i\metnc(M).
\eeq

\begin{theorem}\label{main}
Let $(M,g)$ be a closed Riemannian $n$-manifold without conjugate 
points. Assume that the fundamental group of $M$ is hyperbolic.
Then for all $1<i<\min\{\frac{n-10}{2},\frac{n-8}{3}\}$ the  map
\beq
\Phi_*^g:\pi_i(\Diff_0(M),id)\otimes\Q\to\pi_i(\metnc(M),g)\otimes\Q
\eeq
is injective.
\end{theorem}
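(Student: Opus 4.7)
My approach is to adapt the Farrell--Ontaneda framework behind fibration (\ref{fibration}) to the context of metrics without conjugate points. Mimicking the construction of $\calT^{<0}(M)$ in \cite{FOGAFA}, one obtains a classifying space $\calT^{nc}(M)$ for fiber-homotopically trivial smooth $M$-bundles equipped with continuously varying fiberwise metrics without conjugate points, yielding a homotopy fibration
\beq
\metnc(M) \to \calT^{nc}(M) \xrightarrow{\frakF^{nc}} B\Diff_0(M).
\eeq
Under the standard identification $\pi_{i+1}B\Diff_0(M)\cong\pi_i\Diff_0(M)$, the connecting homomorphism of this fibration is exactly $\Phi^g_*$. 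Hence, via the long exact sequence on rational homotopy, it suffices to show that $\frakF^{nc}_*:\pi_{i+1}\calT^{nc}(M)\otimes\Q\to\pi_{i+1}B\Diff_0(M)\otimes\Q$ is surjective --- concretely, that every rational homotopy class of smooth $M$-bundles over $S^{i+1}$ is represented (up to an integer multiple) by a bundle admitting fiberwise metrics without conjugate points.

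Given such a smooth bundle $E\to S^{i+1}$, the first step is to invoke topological rigidity: since $\pi_1 M$ is hyperbolic, the Farrell--Jones theorem together with its bundle-theoretic consequences implies that the relevant rational homotopy groups of $B\Top_0(M)$ vanish, so $E$ is, rationally, topologically a product $S^{i+1}\times M$. By classical smoothing theory the ``difference'' between the given smooth structure on $E$ and this topological product structure is encoded by a map $S^{i+1}\to \Top_0(M)/\Diff_0(M)$, whose rational homotopy is controlled in the Igusa stable range via the pseudoisotopy/concordance spectral sequence of Hatcher--Waldhausen. The constraint $1<i<\min\{(n-10)/2,(n-8)/3\}$ appears here as the intersection of the standard smoothing-theoretic and Igusa stability ranges.

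To actually build the fiberwise metric, I would exploit that $\exp_p:(T_pM,\tilde g)\to(M,g)$ is a Riemannian covering whose deck group $\pi_1 M$ acts isometrically on the lifted metric $\tilde g$. The topological trivialization of $E$ provides a continuously varying family of topological $\pi_1 M$-actions on $T_pM$ whose quotients are topological copies of $M$; smoothing these actions in the Igusa range and pushing $\tilde g$ down to the resulting smooth quotients yields the desired family of fiberwise metrics without conjugate points. The hard part is precisely this last smoothing step, since ``no conjugate points'' is a $C^2$-condition on the metric --- finer than the $C^0$-data that smoothing theory naturally supplies --- so the smoothing must be performed carefully, using the explicit covering geometry of the exponential map, in order to preserve the no-conjugate-points property. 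This is the technical heart of the argument and the real source of the dimensional restrictions.
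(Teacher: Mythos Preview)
Your proposal contains a fatal sign error in the exact-sequence step, and the rest of the plan is built on it. In the long exact sequence
\[
\cdots \to \pi_{i+1}\calT^{nc}(M)\xrightarrow{\ \frakF^{nc}_*\ }\pi_{i+1}B\Diff_0(M)\xrightarrow{\ \partial\ }\pi_i\metnc(M)\to\cdots,
\]
with $\partial$ identified with $\Phi^g_*$, injectivity of $\Phi^g_*$ is equivalent to $\frakF^{nc}_*$ being rationally \emph{zero} in degree $i+1$, not surjective. In fact the paper draws exactly this vanishing as Corollary~\ref{coroneg}: once Theorem~\ref{main} is known, it follows that negatively curved (and by the same reasoning, no-conjugate-points) bundles over $S^{i+1}$ are rationally trivial in $\pi_{i+1}B\Diff_0(M)$. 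So what you are trying to build --- a fiberwise no-conjugate-points metric on every rational class of smooth $M$-bundle over $S^{i+1}$ --- would prove precisely the negation of the theorem. Had your construction worked, it would have forced $\Phi^g_*\otimes\Q=0$.

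The paper's actual argument goes in the opposite direction and has nothing to do with producing bundle metrics. One takes a class in $\ker\Phi^g_*\subset\pi_i\Diff_0(M)$, lifts it via $d_*$ to $\pi_{i+1}\bigl(\Top_0(M)/\Diff_0(M)\bigr)$ (this is where rational vanishing of $\pi_i\Top_0(M)$ is used), pushes it through Morlet's isomorphism $\mu_*$ into $\pi_{i+1}\Gamma(\calB_n(M))$, and then shows it lies in the image of $\pi_{i+1}\Gamma(\calB_n^{\D}(M))$ --- a group which is rationally zero in the stated range (Lemma~\ref{Lemma 2}). The geometric hypotheses enter only in this last lifting step (Lemma~\ref{Lemma 1}): ``no conjugate points'' makes $\exp^{g_z}:T\widetilde M\to\widetilde M\times\widetilde M$ a diffeomorphism, and hyperbolicity of $\pi_1M$ supplies a Gromov boundary so that $\widetilde M$ compactifies to a disc $\overline M$ and homeomorphisms $\widetilde f_z$ extend over $\overline M$. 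This converts the tangent-bundle representation into a disc-bundle representation, i.e.\ produces the desired preimage in $\Gamma(\calB_n^{\D}(M))$. Neither the compactification nor the disc-bundle section space appears in your plan; the role you assign to the hyperbolic hypothesis (Farrell--Jones rigidity for $\Top_0(M)$) is present in the paper too, but it is used to make $d_*$ a rational isomorphism, not to trivialize the bundle.
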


For the definition of a \textit{hyperbolic group} we refer the reader to 
\cite{bridson} or \cite{gromov}.

Note that Theorem \ref{main} does give
elements of infinite order in $\pi_i\metnc(M)$ in 
view of the following
result of Farrell and Hsiang. First recall that a
closed manifold $M$ \textit{satisfies 
the strong Borel conjecture} if for all $k\geq 0$,
every self-homotopy equivalence of pairs 
$(M\times\D^k , M \times S^{k-1})\to (M \times\D^k , M\times S^{k-1})$
which is a homeomorphism when restricted to the boundary
$M\times S^{k-1}$ is homotopic (relative to the
boundary) to a homeomorphism.

\begin{theorem*}[Farrell-Hsiang \cite{FH78}]
Let $M$ be a closed aspherical smooth $n$-manifold that satisfies both the
strong Borel conjecture and Conjecture 2 in \cite[p.326]{FH78}.
Then if $0<i<\min\{\frac{n-7}{2},\frac{n-4}{3}\}$\footnote{By the time Farrell and Hsiang obtained this result the stability range
was roughly $i<n/6$. Later Igusa improved this range to the one stated here. See \cite{Igusa-stability} and \cite[p.252]{Igusa-book}.},
\begin{equation}\label{calculation}
\pi_i\Diff_0(M)\otimes\Q=
\begin{cases}
\displaystyle\bigoplus_{j=1}^{\infty}H_{(i+1)-4j}(M,\Q)&\text{ if } n\text{ is odd}\\
0&\text{ if } n\text{ is even}
\end{cases}
\end{equation} 
\end{theorem*}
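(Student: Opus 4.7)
The plan is to reduce the computation of $\pi_i\Diff_0(M)\otimes\Q$ in the Igusa stable range to Borel's rational computation of $K_*(\Z)$, using pseudoisotopy theory and Waldhausen's algebraic $K$-theory of spaces as the bridge.

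First I would compare $\Diff_0(M)$ with the block diffeomorphism group $\widetilde\Diff_0(M)$. For an aspherical $M$, the homotopy fiber of the forgetful map $\widetilde\Diff_0(M)\to G_0(M)$, with $G_0(M)$ the identity component of the space of self-homotopy equivalences of $M$, is described by surgery theory in terms of the rational structure sets of $M\times\D^k$ and the surgery $L$-groups of $\Z\pi_1(M)$. The strong Borel conjecture is precisely what trivialises those structure sets (up to finite index), so $\pi_i\widetilde\Diff_0(M)\otimes\Q\cong\pi_iG_0(M)\otimes\Q$ in positive degrees. For aspherical $M$ with centreless fundamental group, the right-hand side vanishes, since the identity component of $\Map(K(\pi,1),K(\pi,1))$ is contractible when $\pi$ has trivial center. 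Hence the rational homotopy of $\Diff_0(M)$ in positive degrees is detected entirely by the fiber of $\Diff_0(M)\to\widetilde\Diff_0(M)$, which via Hatcher's spectral sequence is controlled, in the stated range, by the $\Z/2$-invariants of the pseudoisotopy space $P(M)$.

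Next I would invoke Igusa's stability theorem to identify $\pi_iP(M)$ with the stable pseudoisotopy $\pi_i\mathcal{P}(M)$ exactly in the range $i<\min\{(n-7)/2,(n-4)/3\}$. By Waldhausen's work, $\mathcal{P}(M)$ is a double loop space of a splitting summand of the smooth Whitehead spectrum, whose rational homotopy is computed by the assembly map $H_*(M;K(\Z))\otimes\Q$. Borel's theorem $\pi_{4j-1}K(\Z)\otimes\Q\cong\Q$ for $j\geq 1$, together with Conjecture 2 of \cite{FH78} (which controls the Nil summands and the lower $K$-theoretic assembly for $\Z\pi_1(M)$), makes the assembly map a rational isomorphism in this range and yields an expression of the form $\bigoplus_{j\geq 1}H_{(i+1)-4j}(M;\Q)$ after accounting for the loop-space shift coming from the Hatcher fibration. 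Taking $\Z/2$-invariants under Hatcher's canonical involution on pseudoisotopy, which by a duality/transfer argument acts on each such summand by a sign that depends on the parity of $n$, collapses the expression to $0$ when $n$ is even and preserves the full sum when $n$ is odd.

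The main obstacle, I expect, is the careful bookkeeping of several independent degree shifts: the Hatcher-spectral-sequence shift relating $\Diff_0$ to $\mathcal{P}(M)$, the double delooping identifying $\mathcal{P}(M)$ with the smooth Whitehead spectrum, and Borel's $(4j-1)$-shift in $K(\Z)$. Matching these up so as to land on precisely $H_{(i+1)-4j}(M;\Q)$ is delicate, as is verifying that Conjecture 2 genuinely suffices to trivialise all Nil/UNil-type obstructions in the assembly map for $\Z\pi_1(M)$ throughout the Igusa stable range. The involution calculation, which decides whether the answer is the full sum or zero, is also a non-trivial input relying on the parametrised $h$-cobordism duality theory available at the time of \cite{FH78}.
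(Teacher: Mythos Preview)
The paper does not prove this statement; it is quoted as a result of Farrell and Hsiang from \cite{FH78} (with the range later improved by Igusa), and is used as input rather than derived. So there is no proof in the paper to compare your proposal against.

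That said, your outline is essentially the correct strategy behind the Farrell--Hsiang computation, in its modern formulation: use the strong Borel conjecture to kill the rational homotopy of the block diffeomorphism space $\widetilde\Diff_0(M)$, so that $\pi_i\Diff_0(M)\otimes\Q$ is detected by the fiber of $\Diff_0(M)\to\widetilde\Diff_0(M)$; identify this fiber in the Igusa range with the $\Z/2$-invariants of stable smooth pseudoisotopy; and compute the latter via Waldhausen $K$-theory, the assembly map (whose bijectivity is the content of Conjecture~2 in \cite{FH78}), and Borel's calculation of $K_*(\Z)\otimes\Q$. The parity of $n$ enters exactly through the sign of the canonical involution on the Borel classes, as you say. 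Two small caveats: first, you do not need $\pi_1M$ centreless, since for an aspherical $M$ the identity component $G_0(M)$ is a $K(Z(\pi_1M),1)$ and hence has $\pi_iG_0(M)=0$ for $i\geq 2$ regardless; second, the original 1978 argument predates the full Waldhausen $A$-theory/assembly picture and Igusa's stability theorem, so the technology there is organised differently even though the underlying ideas are the same.
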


Some important classes of  manifolds relevant to the subject of this paper which satisfy both the strong Borel
conjecture and Conjecture 2 in \cite[p.326]{FH78} are: 
closed negatively curved manifolds \cite{FJ89} or more generally 
closed nonpositively 
curved manifolds \cite{FJ93} and closed aspherical
manifolds with hyperbolic fundamental group \cite{BLR}, \cite{Bartels}. 
For all those, the 
calculation (\ref{calculation}) holds.
%Note also that $\Diff_0(M)$ is a normal 
%subgroup of $\Diff(M)$ and the
%natural group homomorphism 
%$\Diff(M)/\Diff_0(M)\to\mbox{Out}(\pi_1M)$ is injective. The latter is a finite group provided $M$ is a 
%negatively curved manifold \cite{gromov}.
%Hence $\Diff_0(M)$ and $\Diff(M)$ have the same 
%rational homotopy groups. 

In the case of a negatively curved Riemannian manifold $(M,g)$ 
the orbit map $\Phi^g$ factors through the inclusion 
\beq
\metneg(M)\hookrightarrow\metnc(M),
\eeq
and since the fundamental group of $M$ is hyperbolic, we have:
\begin{coro}\label{coroforneg}
Let $(M,g)$ be a closed negatively curved Riemannian $n$-manifold.
Then for all $1<i<\min\{\frac{n-10}{2},\frac{n-8}{3}\}$ the  map
\beq
\Phi_*^g:\pi_i(\Diff_0(M),id)\otimes\Q\to\pi_i(\metneg(M),g)\otimes\Q
\eeq
is injective. 
\end{coro}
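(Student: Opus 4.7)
The plan is to deduce Corollary \ref{coroforneg} as an essentially immediate consequence of Theorem \ref{main}, combined with the observation already flagged in the paper that $\Phi^g$ lands in $\metneg(M)$ when $g$ itself is negatively curved. The argument proceeds in two short steps.

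\textbf{First}, I would verify that $(M,g)$ meets the hypotheses of Theorem \ref{main}. A negatively curved metric has no conjugate points by the standard Jacobi field comparison: if $J$ is a nontrivial Jacobi field along a geodesic $\gamma$ with $J(0)=0$, then $\|J\|^2$ is strictly convex under negative sectional curvature, so $J(t)\ne 0$ for all $t>0$. Also, the fundamental group of a closed negatively curved Riemannian manifold is Gromov-hyperbolic: the universal cover is a CAT$(-k)$ space for some $k>0$, so geodesic triangles are uniformly thin. Hence Theorem \ref{main} applies and yields that
$$\Phi_*^g:\pi_i(\Diff_0(M),id)\otimes\Q\to\pi_i(\metnc(M),g)\otimes\Q$$
is injective in the range $1<i<\min\{\frac{n-10}{2},\frac{n-8}{3}\}$.

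\textbf{Second}, I would observe that the orbit map factors through $\metneg(M)$. For any $f\in\Diff_0(M)$ the map $f:(M,g)\to(M,f_*g)$ is by construction an isometry, and isometries preserve sectional curvature; so if $g$ is negatively curved then so is $f_*g$. Consequently $\Phi^g$ takes values in $\metneg(M)\subset\metnc(M)$, giving a factorisation
$$\Diff_0(M)\xrightarrow{\widetilde{\Phi}^g}\metneg(M)\hookrightarrow\metnc(M).$$
Passing to $\pi_i(\cdot)\otimes\Q$ produces a commutative triangle whose diagonal is $\Phi_*^g$, which is injective by the previous step. Hence $\widetilde{\Phi}_*^g$ is injective as well, which is exactly the statement of the corollary.

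There is no real obstacle: all of the work has been absorbed into Theorem \ref{main}, and the corollary is merely a matter of verifying that the hypotheses on $(M,g)$ are inherited from negative curvature and that the codomain of $\Phi^g$ can be restricted to $\metneg(M)$ without loss of information.
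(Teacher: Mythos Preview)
Your proposal is correct and matches the paper's own derivation essentially verbatim: the paper notes that for a negatively curved $(M,g)$ the orbit map $\Phi^g$ factors through the inclusion $\metneg(M)\hookrightarrow\metnc(M)$ and that $\pi_1M$ is hyperbolic, so Theorem \ref{main} applies and the injectivity of the composite forces injectivity of the first factor. Your added justifications (no conjugate points via Jacobi field convexity, hyperbolicity via $\mathrm{CAT}(-k)$) are standard and only make explicit what the paper takes for granted.
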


\begin{rmk}
Similar corollaries can be obtained in the same way 
for various spaces of Riemannian 
metrics. Some other examples are: 
the space $\mbox{MET}^{\leq 0}(M)$ of nonpositively 
curved Riemannian metrics on a closed negatively curved manifold $M$
(considered already in \cite{FO15}), and the space
$\mbox{MET}^{\calA}(M)$ of Riemannian metrics with geodesic flow of Anosov
type (see \cite{klin} for the non-conjugate points condition and \cite{ruggiero-asterisque, ruggiero-expansive} for hyperbolicity
of the fundamental group). 
\end{rmk}

Corollary \ref{coroforneg}
together with the long exact sequence
in homotopy groups
for the fibration (\ref{fibration}) imply that,
in the Igusa stable 
range, every negatively curved bundle over an $(i+1)$-sphere 
has finite order (viewed as an element in $\pi_{i+1}B\Diff_0(M)$).
More precisely:
\begin{coro}\label{coroneg}
Let $(M,g)$ be a closed negatively curved Riemannian $n$-manifold.
Then for all $2<i<\min\{\frac{n-10}{2},\frac{n-8}{3}\}$
there is a short exact sequence
\beq
0\to\pi_i\Diff_0(M)\otimes\Q\to\pi_i\metneg(M)\otimes\Q
\to\pi_i\calT^{<0}(M)\otimes\Q\to 0.
\eeq
In particular, the forgetful map 
$\mathfrak{F}\otimes id_{\Q}:
\pi_{i+1}\calT^{<0}(M)\otimes\Q\to \pi_{i+1}B\Diff_0(M)\otimes\Q$ 
is trivial for all $1<i<\min\{\frac{n-10}{2},\frac{n-8}{3}\}$.
\end{coro}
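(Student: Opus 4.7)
The plan is to feed the long exact sequence in homotopy of the fibration (\ref{fibration}), tensored with $\Q$, into Corollary \ref{coroforneg}. The crucial input is that the connecting homomorphism $\partial\colon\pi_{i+1}B\Diff_0(M)\to\pi_i\metneg(M)$ agrees, under the standard isomorphism $\pi_{i+1}B\Diff_0(M)\cong\pi_i\Diff_0(M)$, with the orbit map $\Phi^g_*$. This follows by realizing $\calT^{<0}(M)$ up to homotopy as the Borel construction $\metneg(M)\times_{\Diff_0(M)}E\Diff_0(M)$ for the push-forward action, so that (\ref{fibration}) becomes the associated Borel fibration and the connecting homomorphism is tautologically the orbit of the chosen basepoint $g$.

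With this identification in hand, the relevant portion of the long exact sequence, after tensoring with $\Q$, reads
\beq
\pi_i\Diff_0(M)\otimes\Q\xrightarrow{\Phi^g_*\otimes\Q}\pi_i\metneg(M)\otimes\Q\to\pi_i\calT^{<0}(M)\otimes\Q\to\pi_{i-1}\Diff_0(M)\otimes\Q\xrightarrow{\Phi^g_*\otimes\Q}\pi_{i-1}\metneg(M)\otimes\Q.
\eeq
For $2<i<\min\{\frac{n-10}{2},\frac{n-8}{3}\}$ both indices $i$ and $i-1$ lie in the open interval $(1,\min\{\frac{n-10}{2},\frac{n-8}{3}\})$, so Corollary \ref{coroforneg} makes both displayed instances of $\Phi^g_*\otimes\Q$ injective. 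Injectivity at degree $i-1$ kills the arrow leaving $\pi_i\calT^{<0}(M)\otimes\Q$, while injectivity at degree $i$ turns the remaining three-term portion into the short exact sequence asserted in the statement.

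For the second assertion only the rational injectivity of $\Phi^g_*$ at degree $i$ is needed, and this is available throughout the wider range $1<i<\min\{\frac{n-10}{2},\frac{n-8}{3}\}$. By exactness at $\pi_{i+1}B\Diff_0(M)\otimes\Q$, the image of $\frakF_*\otimes id_\Q\colon\pi_{i+1}\calT^{<0}(M)\otimes\Q\to\pi_{i+1}B\Diff_0(M)\otimes\Q$ equals the kernel of $\partial\otimes\Q=\Phi^g_*\otimes\Q$, which is trivial; hence $\frakF_*\otimes id_\Q$ vanishes in this range.

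The main obstacle is the identification $\partial=\Phi^g_*$, which ultimately relies on unpacking the moduli-theoretic construction of $\calT^{<0}(M)$ from \cite{FOGAFA} and verifying that it is homotopy equivalent to the Borel construction $\metneg(M)\times_{\Diff_0(M)}E\Diff_0(M)$. Once that step is granted, the remainder of the argument is a routine diagram chase in the long exact sequence of (\ref{fibration}) combined with Corollary \ref{coroforneg}.
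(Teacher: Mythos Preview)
Your argument is correct and matches the paper's own reasoning: the paper simply states that Corollary~\ref{coroforneg} together with the long exact sequence of the fibration~(\ref{fibration}) yields the result, and you have spelled out exactly this deduction, including the identification of the connecting map with $\Phi^g_*$ via the Borel-construction model for $\calT^{<0}(M)$ (which is indeed how $\calT^{<0}(M)$ is built in \cite{FOGAFA}).
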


Instead of looking at smooth negatively 
curved Riemannian metrics,
one could consider the weaker notion of a $\mbox{CAT(-1)}$ metric
on the closed smooth manifold $M$, that
is metrics on $M$ with the property that every geodesic
triangle is ``thinner'' than the corresponding 
comparison triangle in the hyperbolic plane 
(for a precise definition see \cite{bridson}). 
Let $\mbox{CAT}^{-1}(M)$ be the space of all such metrics on $M$
with the $C^0$-topology. There is a continuous map 
$\beta:\mbox{MET}^{<-1}(M)\to\mbox{CAT}^{-1}(M)$
given by sending each negatively curved metric $M$ to the 
induced distance function on $M$. Here $\mbox{MET}^{<-1}(M)$ is the
space of all metrics on $M$ with curvature less than -1.
\begin{coro}\label{cat}
%There exist closed negatively curved Riemannian manifolds
%such that the map $\beta$
%is not a homotopy equivalence.
There exist an $i>0$, a closed negatively curved manifold $M$ and a 
non-trivial smooth $M$-bundle over the $(i+1)$-sphere $S^{i+1}$, which
admits a continuously varying family of $\mbox{CAT}(-1)$ metrics on the
fibers  and
it is \textit{not} a negatively curved bundle. 
\end{coro}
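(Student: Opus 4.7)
The plan is to produce the desired bundle by combining Corollary \ref{coroneg} with a rational flexibility property of the space $\mbox{CAT}^{-1}(M)$ of CAT(-1) distance functions. By the Farrell--Hsiang calculation, for a closed hyperbolic $n$-manifold $M$ with $n$ odd and sufficiently large, there exists an integer $i$ in the Igusa stable range $1<i<\min\{(n-10)/2,(n-8)/3\}$ with $\pi_i\Diff_0(M)\otimes\Q\ne 0$. Fix such $M$ and $i$, choose $\alpha\in\pi_i\Diff_0(M)$ of infinite order, and let $E\to S^{i+1}$ be the smooth $M$-bundle obtained by clutching along a representative of $\alpha$; its class $[E]\in\pi_{i+1}B\Diff_0(M)$ has infinite order. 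Corollary \ref{coroneg} says that the forgetful map $\frakF\otimes\mathrm{id}_\Q$ vanishes in this range, so $E$ is not equivalent to any negatively curved bundle.

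To equip $E$ with a continuously varying family of CAT(-1) fiberwise distances, rescale $g$ so that $d_g\in\mbox{CAT}^{-1}(M)$ and consider the orbit map $\Psi^{d_g}:\Diff_0(M)\to\mbox{CAT}^{-1}(M)$, $\phi\mapsto\phi_*d_g$, which factors $\Phi^g$ through the distance inclusion $\beta:\metneg(M)\hookrightarrow\mbox{CAT}^{-1}(M)$. Trivializing $E$ over the two hemispheres of $S^{i+1}$ with the constant distance $d_g$, the only obstruction to gluing to a continuous fiberwise family of CAT(-1) metrics is the homotopy class $\Psi^{d_g}_*(\alpha)\in\pi_i\mbox{CAT}^{-1}(M)$: a null-homotopy of this class on the lower hemisphere produces the required family. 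Replacing $\alpha$ by a positive multiple if necessary (which preserves rational non-triviality), it thus suffices to exhibit a non-zero class in $\pi_i\Diff_0(M)\otimes\Q$ lying in the kernel of $\Psi^{d_g}_*$; equivalently, to show that $\Psi^{d_g}_*$ is \emph{not} rationally injective on $\pi_i$, in sharp contrast with the injectivity of $\Phi^g_*$ given by Theorem \ref{main}.

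The main obstacle is precisely this failure of rational injectivity of $\Psi^{d_g}_*$. The proof of Theorem \ref{main} exploits smoothing theory, which has no counterpart in the purely metric setting of CAT(-1), so one expects $\mbox{CAT}^{-1}(M)$ to have substantially simpler rational homotopy than $\metneg(M)$ in the Igusa stable range. A natural strategy is to exhibit a canonical deformation of an arbitrary CAT(-1) distance on $M$ to the hyperbolic one -- for instance via a barycenter construction following Besson--Courtois--Gallot applied to the natural boundary map of each CAT(-1) structure, or via a straightening of the identity map at the level of universal covers -- operations that preserve CAT(-1) but have no counterpart among smooth negatively curved Riemannian metrics. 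Constructing such a deformation and extracting from it a genuine non-zero element in $\ker(\Psi^{d_g}_*)\otimes\Q$ is the delicate step of the argument.
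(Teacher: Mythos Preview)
Your framework is correct up to the point where you identify the obstruction: you need $\Psi^{d_g}_*(\alpha)$ to vanish (after possibly replacing $\alpha$ by a multiple) in $\pi_i\mbox{CAT}^{-1}(M)$. But the argument you then outline---a barycenter or straightening deformation of CAT($-1$) distances to the hyperbolic one---is both speculative and unnecessary, and you do not carry it out. As written, the proof is incomplete.

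The missing observation is much simpler than anything in the Besson--Courtois--Gallot circle of ideas: a CAT($-1$) metric on $M$ is just a distance function, and distance functions can be pushed forward by \emph{homeomorphisms}, not only diffeomorphisms. Hence the orbit map $\Psi^{d_g}:\Diff_0(M)\to\mbox{CAT}^{-1}(M)$ factors through $\Top_0(M)$:
\[
\xymatrix{
\pi_i\Diff_0(M)\otimes\Q\ar[r]\ar[d] & \pi_i\mbox{MET}^{<-1}(M)\otimes\Q\ar[d]^{\beta_*}\\
\pi_i\Top_0(M)\otimes\Q\ar[r] & \pi_i\mbox{CAT}^{-1}(M)\otimes\Q .
}
\]
Now invoke the fact, already used in the paper's proof of Theorem~\ref{main}, that $\pi_i\Top_0(M)\otimes\Q=0$ for $1<i<\min\{(n-7)/2,(n-4)/3\}$ when $M$ is closed aspherical and satisfies the strong Borel conjecture. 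Thus \emph{every} $\alpha\in\pi_i\Diff_0(M)$ has $\Psi^{d_g}_*(\alpha)$ of finite order, and after replacing $\alpha$ by a suitable multiple you obtain the required null-homotopy in $\mbox{CAT}^{-1}(M)$. (In fact the paper chooses $\alpha$ already mapping to zero in $\pi_i\Top_0(M)$, since the latter is a torsion group.) This is the entire content of the ``delicate step'' you left open; no deformation of CAT($-1$) structures is needed.
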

\begin{proof}
Take $(M^n,g)$ to be an odd-dimensional closed negatively curved manifold such
that $H_{(i+1)-4j}(M,\Q)\neq 0$ for some $1<i<\min\{\frac{n-10}{2},\frac{n-8}{3}\}$ and some $j\geq 1$ (e.g.
for $i=3$ and $j=1$,  any connected $19$-dimensional negatively curved manifold satisfies this condition).  By (\ref{calculation}), there
is a map $f:S^{i}\to\Diff_0(M)$ which represents a non-trivial class $[f]$ in
$\pi_i\Diff_0(M)\otimes\Q$, but maps to zero in $\pi_i\Top_0(M)$. This is possible since $\pi_i\Top_0(M)$
is a torsion abelian group when $1<i<\min\{\frac{n-7}{2},\frac{n-4}{3}\}$ by  \cite{FJ93} (see also \cite{F-ICTP}). Now
perform the the clutching construction, that is glue two 
copies of $D^{i+1}\times M$ with their boundaries
identified by $(z,y)\mapsto (z,f(z)(y))$, for 
$(z,y)\in \partial D^{i+1}\times M$. This gives rise to non-trivial
smooth $M$-bundle over the $(i+1)$-sphere.  This bundle is 
not a negatively curved bundle. Indeed, if it were, then $[f]$ would be in the
image of the forget map $\mathfrak{F}\otimes id_{\Q}:
\pi_{i+1}\calT^{<0}(M)\otimes\Q\to \pi_{i+1}B\Diff_0(M)\otimes\Q$ . But this  is not possible by 
Corollary \ref{coroneg}. To show that the bundle admits a continuously varying
family of $\mbox{CAT}(-1)$ metrics, it suffices to show that the map
$S^i \to \mbox{CAT}^{-1}(M)$ given by
$z\mapsto \beta(f(z)_*g)$ extends over the $(i+1)$-disc. 
To see this, note that $\Top_0(M)$ acts continuously 
on $\mbox{CAT}^{-1}(M)$ by pushing-forward the distance
function induced by $g$.
One can form the following commutative diagram
\beq
\xymatrix{
\pi_i\Diff_0(M)\otimes\Q\ar[r]\ar[d] & \pi_i\mbox{MET}^{<-1}(M)\otimes\Q\ar[d]^{\beta_*}\\
\pi_i\Top_0(M)\otimes\Q\ar[r]	& \pi_i\mbox{CAT}^{-1}(M)\otimes\Q
}
\eeq
As noted above, the lower left group vanishes for $1<i<\min\{\frac{n-7}{2},\frac{n-4}{3}\}$. 
This completes the proof of the corollary.
\end{proof}

%\textbf{Let now $\metneg_{C^0}(M)$ be the space of all negatively curved
%metrics on the closed manifold $M$, but now endowed with the $C^0$
%topology.}
%
%\begin{coro}\label{C0}
%There exist closed negatively curved Riemannian manifolds
%such that the identity map 
%$\mbox{MET}(M)\to\metneg_{C^0}(M)$
%is not a homotopy equivalence.
%\end{coro}
%\begin{proof}
%The proof is the same as in the previous corollary, just replacing
%$\mbox{CAT}^{-1}(M)$ by $\metneg_0(M)$.
%\end{proof}

\subsection*{Comments on other related work}
\begin{enumerate}
\item As mentioned before
Farrell and Ontaneda \cite{FO10} 
obtain torsion elements in the homotopy groups of the space
of negatively curved Riemannian metrics. Their method would
never give infinite order elements because it
depends strongly on the existence of non-trivial 
elements in the homotopy groups of the space of 
stable \textit{topological}  
pseudoisotopies of the circle. But in Igusa's stable range, those
are torsion groups \cite{waldhausen}. 
\item Farrell and Ontaneda \cite{FOAnnals} 
find non-trivial 
elements of finite order
in $\pi_i\calT^{<0}(M)$, when $i\ll\mbox{dim}\, M$, and $M$ is
a real hyperbolic manifold. But again their method fails to give
information about infinite order elements in these homotopy
groups. The reason here is that
their construction
relies on finding nonzero elements in
$\pi_i\Diff(\D^n,\partial)$ which 
have preimages under the Gromoll map
$\pi_{i+1}\Diff(\D^{n-1},\partial)\to\pi_{i}\Diff(\D^{n},\partial)$, where $\Diff(\D^n,\partial)$ is the group of diffeomorphisms of
a closed disc $\D^n$ which are the identity on the boundary. But in the Igusa stability range, 
this map is rationally trivial \cite{FH78}. Nonetheless
elements of infinite order in 
$\pi_i\calT^{<0}(M)$ can be obtained if we look outside Igusa's stable
range. We elaborate on this in \cite{BFJ-Teich}.
\end{enumerate}

Our main tool to prove Theorem \ref{main} is Morlet's comparison theorem, which
we recall in Section \ref{background}.
In Section \ref{outline} we reduce Theorem \ref{main}
to two lemmas: one of pure topological nature (proved in 
Section \ref{prooflemma2}), and one that incorporates the geometry of 
$(M,g)$ (proved in the last section).
\subsection*{Acknowledgements}
M. Bustamante is supported by the DFG Priority Programme \textit{Geometry at Infinity} (SPP 2026): 
\textit{Spaces and moduli spaces of Riemannian metrics with curvature bounds on compact and non-compact manifolds.}
 He acknowledges the hospitality of the Yau Mathematical Sciences Center at Tsinghua University in Beijing, 
 where he was a postdoc when most of the results of this paper were obtained.  
%%%%%%%%%%%%%%%%%%%%%%%%%%%%%%%%%%%%%%%%%%%%%%%%%%%%%%%%%%
%%%%%%%%%%%%%%%%%%%%%%%%%%%%%%%%%%%%%%%%%%%%%%%%%%%%%%%%%%
\section{Background and notation}\label{background}
Throughout this paper, $\Top(X)$ denotes the group of 
self-homeomorphisms of
a topological space $X$, endowed with the compact open topology. 
If $X$ is a 
closed smooth manifold then 
$\Diff(X)$ denotes
the group of smooth self-diffeomorphisms of $X$ 
with the smooth topology. 
We denote by $\Top_0(X)$
(resp. $\Diff_0(X)$) the subgroup of $\Top(X)$
(resp. $\Diff(X)$) consisting of all those self-homeomorphisms
(resp. self-diffeomorphisms) of $X$ which are \textit{homotopic}
to the identity map.
Also, $\Top(n)$ will denote the group of homeomorphisms of
$\R^n$ with the compact open topology. We keep the customary notation $\On(n)$ for the group
of orthogonal (w.r.t. Euclidean metric) transformations of $\R^n$ with its usual topology.
\subsection{Morlet's comparison theorem}\label{Morlet} 
Let $M$ be a closed smooth $n$-dimensional manifold.
Let $\sTop(M)$ and 
$\sDiff(M)$ denote the singular and
smooth singular complex of $\Top(M)$ and 
$\Diff(M)$ respectively.
These are
simplicial sets whose geometric realizations are weakly
homotopy equivalent to $\Top(M)$ and 
$\Diff(M)$ respectively. A $k$-simplex
of $\sTop(M)$ (resp. $\sDiff(M)$) 
is a homeomorphism
(resp. diffeomorphism)
$\Delta^k\times M\to\Delta^k\times M$ which commutes with the
projection to the standard $k$ simplex $\Delta^k$.
The corresponding singular and smooth singular complexes
of $\Top_0(M)$ and $\Diff_0(M)$ are denoted by 
$\sTop_0(M)$ and $\sDiff_0(M)$ respectively.

One
advantage of working in this setting is that the simplicial
group $\sDiff_0(M)$ acts freely on $\sTop_0(M)$
and so the quotient
$\sTDiffn{M}{M}$ is naturally
a simplicial set, in fact a 
Kan complex. Furthermore there
is a Kan fibration
\beq
\sDiff_0(M)\to\sTop_0(M)\to
\sTDiffn{M}{M},
\eeq
which gives rise to the following long exact sequence of 
homotopy groups
\beq
\cdots\to\pi_{i+1}\Top_0(M)\to\pi_{i+1}
\TDiffn{M}{M}\xrightarrow{d_*}\pi_i\Diff_0(M)\to\pi_i\Top_0(M)\to\cdots
\eeq
where we denote by $\TDiffn{M}{M}$ the geometric realization of
 $\sTDiffn{M}{M}$, and the homotopy groups of  $|\mathbf{TOP}_0|$ (resp. $|\mathbf{DIFF}_0|$)
are identified with those of $\Top_0(M)$ (resp. $\Diff_0(M)$). 
%Note that $\TDiffn{M}{M}$ is (weakly) homotopy equivalent
%to the homotopy fiber of the inclusion induced map
%$B\Diff_0(M)\to B\Top_0(M)$ between classifying spaces.

Assume now that $M$ comes equipped with a Riemannian 
metric, and regard its tangent bundle $TM$ as a Euclidean
vector bundle. There is an associated (right) 
principal $\On(n)$-bundle $PM\to M$. 

Note that
$\On(n)$ acts on the left on the coset space $\TO{n}{n}$. Thus
we can form the balanced product
\beq
\calB_n(M):=PM\times_{\On(n)}\TO{n}{n},
\eeq
which is a fiber bundle over $M$ with fiber $\TO{n}{n}$.

The space (with the
compact-open topology) of 
sections of $\calB_n(M)$ is denoted by 
$\Gamma\left(\calB_n(M)\right)$.
Note that $\Gamma\left(\calB_n(M)\right)$ 
has a preferred element $s_0:M\to\calB_n(M)$
given by the $\On(n)$-invariant point $id\ \On(n)\in\Top(n)/\On(n)$.
%determined
%by the smooth structure of $M$ \cite[p.453]{BLAnnals}. 
%This section is given the obvious map
%\beq
%s_0:M=PM\times_{\On(n)}\frac{\On(n)}{\On(n)}\to
%PM\times_{\On(n)}\TO{n}{n}
%\eeq

The space $\frac{\Top_0(M)}{\Diff_0(M)}$ can be related to the space
of sections $\Gamma(\calB_n(M))$. We briefly recall how this is done, following the ideas of \cite{BLTrans}. Consider simplicial sets 
$\overline{\textbf{R}}^T(M)\supset$ 
$\overline{\textbf{R}}^d(M)\supset$ 
$\overline{\textbf{R}}^o(M)$ 
of topological, 
linear and orthogonal representations 
of the tangent bundle of $M$ respectively. That is,
a $k$-simplex of
$\overline{\textbf{R}}^T(M)$ is a topological $\R^n$-bundle isomorphism
$\Delta^k\times TM\to\Delta^k\times TM$ which leaves invariant the image of the zero section, and which commutes with
the projection to $\Delta^k$ and
covers some homeomorphism 
$\Delta^k\times M\to\Delta^k\times M$. 
The $k$-simplices of
$\overline{\textbf{R}}^d(M)$ 
and $\overline{\textbf{R}}^o(M)$
are defined similarly but 
the maps $\Delta^k\times TM\to\Delta^k\times TM$ are now
vector 
bundle isomorphisms and vector bundle isomorphisms which are
fiberwise isometries, respectively. We also consider a simplicial set $\overline{\textbf{R}}^t(M)$
whose $k$-simplices are germs of topological microbundle isomorphisms of $\Delta^k\times TM$ which commute
with the projection onto $\Delta^k$, and cover some homeomorphism $\Delta^k\times M\to \Delta^k\times M$.

Simplicial sets $\underline{\textbf{R}}^t(M)$, $\underline{\textbf{R}}^T(M)$,
$\underline{\textbf{R}}^d(M)$ and $\underline{\textbf{R}}^o(M)$
are defined analogously, the
only difference is that the
isomorphisms $\Delta^k\times TM\to\Delta^k\times TM$ must cover the identity map on $\Delta^k\times M$.

Now recall that the tangent microbundle of $M$ 
is the diagram 
$M\xrightarrow{\Delta}M\times M\xrightarrow{pr_1}M$, where
$\Delta$ is the diagonal map and $pr_1$ is the projection onto the
first factor. Since  we have fixed a
Riemannian metric on $M$, the
exponential map gives an isomorphism between $TM$ and the tangent
microbundle of $M$ (see e.g. \cite[p.56-58]{milnor-micro}). 
Thus, via this isomorphism,
any self-homeomorphism $f:M\to M$ induces, by taking $f\times f$,
a microbundle automorphism of $TM$ which is called
the \textit{topological derivative} of $f$.
This gives rise to a
simplicial map (c.f. \cite[p.453]{BLAnnals})
\begin{equation}\label{derivative}
\delta:\sTDiffn{M}{M}\to
\overline{\textbf{R}}^t(M)/\overline{\textbf{R}}^d(M)
\end{equation}

Morlet's comparison theorem is the statement that the topological 
derivative $\delta$ induces an injective map on 
connected components and an isomorphism
on higher homotopy groups, provided $M$ is a closed manifold of
dimension $\neq 4$ \cite[Proposition 4.3]{BLTrans}.

To complete the picture, we relate  $\overline{\textbf{R}}^t(M)/\overline{\textbf{R}}^d(M)$
to the space of sections $\Gamma(\calB_n(M))$ (or rather its singular simplicial complex 
$\textbf{S}\Gamma(\calB_n(M)))$ via the following two lemmas, whose proofs are an elaboration of the argument given by Burghelea and Lashof in
 \cite[Theorem 4.2 ($1^t$)]{BLTrans}:
\begin{lemma}\label{R^t}
The simplicial sets
$\overline{\textbf{R}}^t(M)/\overline{\textbf{R}}^d(M)$
and
$\underline{\textbf{R}}^T(M)/\underline{\textbf{R}}^o(M)$
are weakly homotopy equivalent.
\end{lemma}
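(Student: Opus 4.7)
\bigskip

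\noindent\textbf{Proof plan.}
The plan is to establish the lemma by exhibiting a zig-zag of weak equivalences through intermediate representation simplicial sets, working away from the topological/microbundle side and toward the $\R^n$-bundle/orthogonal side while simultaneously restricting the allowable base maps from arbitrary homeomorphisms to the identity. The three principal inputs are (i) the Kister-Mazur theorem, which says that every topological $\R^n$-microbundle contains a topological $\R^n$-bundle, unique up to isomorphism, (ii) the Gram-Schmidt deformation retraction of $\GL_n(\R)$ onto $\On(n)$, and (iii) the ``topological derivative'' construction, which uses the exponential map to identify $TM$ with the tangent microbundle and thereby sends each self-homeomorphism $f$ of $\Delta^k\times M$ to the microbundle automorphism $f\times f$.

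Concretely, I would factor the desired comparison through the chain
\beq
\overline{\textbf{R}}^t(M)/\overline{\textbf{R}}^d(M)
\;\xleftarrow{(a)}\;
\overline{\textbf{R}}^T(M)/\overline{\textbf{R}}^d(M)
\;\xleftarrow{(b)}\;
\overline{\textbf{R}}^T(M)/\overline{\textbf{R}}^o(M)
\;\xleftarrow{(c)}\;
\underline{\textbf{R}}^T(M)/\underline{\textbf{R}}^o(M).
\eeq
For (a), the parametric Kister-Mazur theorem promotes the inclusion $\overline{\textbf{R}}^T(M)\hookrightarrow \overline{\textbf{R}}^t(M)$ (regarding an $\R^n$-bundle automorphism as its induced germ) to a weak equivalence of simplicial sets; since $\overline{\textbf{R}}^d(M)$ sits inside both as a sub-simplicial group acting compatibly, passing to the coset simplicial sets preserves the weak equivalence. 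For (b), the fiberwise Gram-Schmidt retraction yields a weak equivalence $\overline{\textbf{R}}^o(M)\hookrightarrow \overline{\textbf{R}}^d(M)$, and a standard comparison of principal fibrations shows that the induced map of coset simplicial sets $\overline{\textbf{R}}^T/\overline{\textbf{R}}^o\to\overline{\textbf{R}}^T/\overline{\textbf{R}}^d$ is again a weak equivalence.

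The main obstacle is step (c), the passage from arbitrary base homeomorphisms to the identity base. The idea is to exploit the ``forget to the base'' projections
\beq
\overline{\textbf{R}}^T(M)\to\sTop(M),\qquad \overline{\textbf{R}}^o(M)\to\sDiff(M),
\eeq
both of which are Kan fibrations with fibers $\underline{\textbf{R}}^T(M)$ and $\underline{\textbf{R}}^o(M)$ respectively. Combining Kister-Mazur with the topological derivative produces a simplicial section of the first projection, and the linear derivative followed by Gram-Schmidt produces a section of the second; these sections allow one to identify both total spaces with semidirect products and compare the coset simplicial sets fiberwise. A careful chase of the resulting long exact sequences of homotopy groups (together with the fact that the left-multiplication $\sDiff$-action on $\underline{\textbf{R}}^o$ extends to a compatible $\sTop$-action on $\underline{\textbf{R}}^T$ through the topological derivative) then shows that the inclusion-induced map $\underline{\textbf{R}}^T/\underline{\textbf{R}}^o \to \overline{\textbf{R}}^T/\overline{\textbf{R}}^o$ is a weak equivalence.

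The most delicate piece of bookkeeping, and where I expect the technical heart of the argument to lie, is checking that the two sections constructed via the topological and linear derivatives are genuinely compatible under the inclusion $\overline{\textbf{R}}^o\hookrightarrow\overline{\textbf{R}}^T$ (so that the comparison of fibrations commutes up to simplicial homotopy) and that the Gram-Schmidt correction needed to land in $\overline{\textbf{R}}^o$ from $\overline{\textbf{R}}^d$ does not destroy the semidirect-product splitting. This is essentially the content elaborated in \cite[Theorem 4.2 ($1^t$)]{BLTrans}, and once verified in this parametric setting the three weak equivalences compose to prove the lemma.
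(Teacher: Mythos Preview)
Your three ingredients---Kister--Mazur, Gram--Schmidt, and the underline-to-overline comparison via derivative sections---are exactly the ones the paper uses, but you apply them in the opposite order. The paper's chain is
\beq
\underline{\textbf{R}}^T/\underline{\textbf{R}}^o
\;\to\;
\underline{\textbf{R}}^t/\underline{\textbf{R}}^o
\;\to\;
\underline{\textbf{R}}^t/\underline{\textbf{R}}^d
\;\to\;
\overline{\textbf{R}}^t/\overline{\textbf{R}}^d,
\eeq
i.e.\ Kister--Mazur and Gram--Schmidt are done first at the \emph{underlined} level (covering the identity), and only then does one pass from underline to overline, at the $t/d$ stage. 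Your chain does Kister--Mazur and Gram--Schmidt at the \emph{overlined} level and postpones the underline-to-overline passage to the $T/o$ stage.

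Both orderings are valid, but the paper's is cheaper. At the underlined level the representation spaces are literally section spaces of fiber bundles over $M$ (the paper makes this explicit with the bundles $\calT(M)$ and $\calE(M)$), so Kister--Mazur reduces to a fiberwise statement plus an obstruction-theory argument. More importantly, the underline-to-overline step at $t/d$ is exactly \cite[Proposition~1.4]{BLTrans} and can simply be cited; the topological derivative lands on the nose in $\overline{\textbf{R}}^t$, giving a genuine simplicial section of $\overline{\textbf{R}}^t\to\sTop(M)$. In your route the analogous section of $\overline{\textbf{R}}^T\to\sTop(M)$ does \emph{not} exist strictly---the topological derivative gives only a germ, and Kister--Mazur promotes it to an $\R^n$-bundle map only up to homotopy---so your step~(c) requires a homotopy-section argument rather than the clean semidirect-product splitting you describe. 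This is not fatal (the long-exact-sequence chase still goes through), but it is precisely the ``delicate bookkeeping'' you flag, and the paper's ordering sidesteps it entirely.
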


\begin{lemma}\label{the map S}
There exits a map 
$S:\underline{\textbf{R}}^T(M)/\underline{\textbf{R}}^o(M)
\rightarrow
\textbf{S}\Gamma(\calB_n(M))$ which
induces an injective map on connected components and a weak homotopy equivalence on any connected component.
\end{lemma}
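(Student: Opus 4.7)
The plan is to define $S$ by a fiberwise ``reduction modulo orthogonal'' construction, and then to establish the homotopical conclusions by reinterpreting both the source and target as section spaces of fiber bundles over $M$ and applying the usual Serre fibration argument for principal bundle projections.

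\emph{Construction.} A $k$-simplex $\phi$ of $\underline{\textbf{R}}^T(M)$ restricts at each $(t,x)\in\Delta^k\times M$ to a self-homeomorphism $\phi_{t,x}$ of $T_xM$ fixing $0$. Given any orthonormal frame $e\in P_xM$, set
\beq
S(\phi)(t,x):=\bigl[\,e,\,[e^{-1}\phi_{t,x}e]\,\bigr]\in PM\times_{\On(n)}\Top(n)/\On(n)=\calB_n(M);
\eeq
using the balanced product relation $[eA,\xi]=[e,A\xi]$, a direct calculation shows that this value is independent of the choice of $e$. This assembles into a $\Delta^k$-family of continuous sections of $\calB_n(M)$, hence a $k$-simplex of $\textbf{S}\Gamma(\calB_n(M))$. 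Postcomposing $\phi$ with an element of $\underline{\textbf{R}}^o(M)$ multiplies $e^{-1}\phi_{t,x}e$ on the right by an element of $\On(n)$, leaving its coset in $\Top(n)/\On(n)$ unchanged; so $S$ descends to $\underline{\textbf{R}}^T(M)/\underline{\textbf{R}}^o(M)$.

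\emph{Homotopical analysis.} Identify $\underline{\textbf{R}}^T(M)$ and $\underline{\textbf{R}}^o(M)$ with $\textbf{S}\Gamma(\mathcal{A}^T(M))$ and $\textbf{S}\Gamma(\mathcal{A}^o(M))$, where $\mathcal{A}^T(M),\mathcal{A}^o(M)\to M$ are the fiber bundles with fibers $\Top(T_xM,0)$ and $\On(T_xM)$ respectively. The inclusion $\Top(n,0)\hookrightarrow \Top(n)$ is a homotopy equivalence (split $\Top(n)\cong \R^n\rtimes \Top(n,0)$ by translation and contract the $\R^n$-factor), and this descends to a fiberwise homotopy equivalence $\mathcal{A}^T(M)/\mathcal{A}^o(M)\simeq \calB_n(M)$; after this identification $S$ is realized by the canonical map $\Gamma(\mathcal{A}^T(M))/\Gamma(\mathcal{A}^o(M))\to \Gamma(\mathcal{A}^T(M)/\mathcal{A}^o(M))$. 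Since $\On(n)$ is a closed subgroup of $\Top(n,0)$, the projection $\Top(n,0)\to \Top(n,0)/\On(n)$ admits local sections and is a principal $\On(n)$-bundle; globalising over $M$ through a finite trivializing cover of the frame bundle together with a partition of unity shows that the map on section spaces
\beq
\Gamma(\mathcal{A}^T(M))\longrightarrow \Gamma(\mathcal{A}^T(M)/\mathcal{A}^o(M))
\eeq
is a Serre fibration onto its image, which is a union of connected components (classified by the isomorphism class of the pulled back principal $\On(n)$-bundle over $M$), with fibers that are torsors over $\Gamma(\mathcal{A}^o(M))$. Comparing the long exact sequence of this fibration with that of the free orbit map $\Gamma(\mathcal{A}^T(M))\to \Gamma(\mathcal{A}^T(M))/\Gamma(\mathcal{A}^o(M))$, via the five lemma, yields the injectivity of $S$ on $\pi_0$ and a weak equivalence onto each component of its image.

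\emph{Main obstacle.} The delicate technical point is establishing the Serre fibration property for the sections map, since the fibers $\Top(n,0)$ are infinite-dimensional topological groups rather than Lie groups. This reduces to constructing and patching local sections of $\Top(n,0)\to\Top(n,0)/\On(n)$ across a finite open cover of $M$, and then handling carefully the level-wise simplicial orbit space $\underline{\textbf{R}}^T(M)/\underline{\textbf{R}}^o(M)$ so that it agrees homotopically with the topological orbit space of $\Gamma(\mathcal{A}^o(M))$ acting freely on $\Gamma(\mathcal{A}^T(M))$.
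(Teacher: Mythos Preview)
Your construction of $S$ is equivalent to the paper's---the paper writes it in local trivializing charts while you write it via orthonormal frames, but these are the same formula---and your overall argument is correct in outline. The route, however, is genuinely different from the paper's.

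The paper proves a stronger statement: it shows that the map $\widehat{S}$ (your $S$ before composing with the homotopy equivalence $\Top(n,0)/\On(n)\to\Top(n)/\On(n)$) is a \emph{simplicial isomorphism} onto a union of path components of $\textbf{S}\Gamma(P_HX\times_H G/H)$, not merely a weak equivalence. The mechanism is Steenrod's classical bijection between $(H,Y)$-bundle structures on a $G$-bundle and sections of the associated $G/H$-bundle \cite[9.4]{Steenrod}; applied parametrically over $\Delta^k$ this gives the simplicial isomorphism directly, and a short classifying-space argument identifies which components are hit. No fibration or five-lemma argument is needed.

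Your approach instead compares the long exact sequence of the free simplicial quotient $\underline{\textbf{R}}^o\to\underline{\textbf{R}}^T\to\underline{\textbf{R}}^T/\underline{\textbf{R}}^o$ with that of the section-space fibration $\Gamma(\mathcal{A}^o)\to\Gamma(\mathcal{A}^T)\to\Gamma(\mathcal{A}^T/\mathcal{A}^o)$. This is a legitimate alternative, and your identification of the main obstacle is accurate: one must check that $\Gamma(\mathcal{A}^T)\to\Gamma(\mathcal{A}^T/\mathcal{A}^o)$ is a Serre fibration (onto a union of components) and that the simplicial orbit space agrees homotopically with the topological one. Both points are handled by the existence of local sections for $\Top(n,0)\to\Top(n,0)/\On(n)$, which follows from the free action of the compact Lie group $\On(n)$. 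What the paper's approach buys is that these analytic verifications are replaced by a single classical reference, and the conclusion is upgraded from ``weak equivalence'' to ``isomorphism of simplicial sets''. What your approach buys is a more self-contained homotopy-theoretic argument that does not invoke the older bundle-theoretic language.
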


The proofs of the lemmas are deferred to the Appendix below.

By putting together the maps  (\ref{derivative}) and (\ref{other maps}) (in the Appendix), the map $S$,
and taking homotopy groups we obtain a map
\beq
\mu_*:
\pi_{i}\TDiffn{M}{M}\to
%\xrightarrow{S_*\circ\, \sigma_*^{-1}\circ\, \delta_*}
\pi_{i}\Gamma\left(\calB_{n}(M)\right),
\eeq
which is injective for $i=0$ and an isomorphism for all $i>0$ for every component that is hit.
This is called Morlet's isomorphism.

\begin{rmk}
There is a version of Morlet's theorem for smooth compact
manifolds with
boundary (see \cite{BLTrans}). In particular it follows from
this and the Alexander trick, that
there is a weak homotopy equivalence 
$\Diff(\D^n,\partial)\to\displaystyle\Omega^{n+1}\TO{n}{n}$, when
$n\neq 4$.
\end{rmk}
%%%%%%%%%%%%%%%%%%%%%%%%%%%%%%%%%%%%%%%%%%%%%%%%%%%%%%%%%%
%%%%%%%%%%%%%%%%%%%%%%%%%%%%%%%%%%%%%%%%%%%%%%%%%%%%%%%%%%%
\section{Outline of the proof of Theorem \ref{main}}\label{outline}
Let $\D^n$ denote a closed unit disc in $\R^n$ centered at the
the origin and let $\mbox{int}\D^n$ be its interior. 
Observe that the balanced product
\beq
\calB_{n}^{\D}(M)=PM\times_{\On(n)}\TO{\D^n}{n}
\eeq
is a fiber bundle over $M$ with fiber $\TO{\D^n}{n}$ which has
a preferred section, namely the obvious map 
$s_1: M=PM\times_{\On(n)}\frac{\On(n)}{\On(n)}\to PM\times_{\On(n)}\TO{\D^n}{n}$. 
\begin{lemma}\label{Lemma 2}
Let $M$ be a closed smooth $n$-dimensional manifold with
$n$ odd. Then 
%$\pi_{1}\Gamma\left(\calB_{n}^{\D}(M)\right)$ is finite and 
for all
$1<i<\min\{\frac{n-8}{2},\frac{n-5}{3}\}$ 
\beq
\pi_{i}\left(\Gamma\left(\calB_{n}^{\D}(M)\right),s_1\right)\otimes\Q=0.
\eeq
\end{lemma}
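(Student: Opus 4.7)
The plan is to prove the rational vanishing of $\pi_i(\Gamma(\calB_n^{\D}(M)),s_1)\otimes\Q$ by combining an obstruction-theoretic spectral sequence for the section space with a rational computation of the homotopy groups of the fiber. Concretely, for the fiber bundle $\calB_n^{\D}(M)\to M$ with fiber $F=\TO{\D^n}{n}$ and base section $s_1$, there is a Federer-type spectral sequence
\begin{equation*}
E_2^{p,q}=H^p(M;\pi_q(F))\Longrightarrow\pi_{q-p}\bigl(\Gamma(\calB_n^{\D}(M)),s_1\bigr),
\end{equation*}
with local coefficients coming from the $\pi_1(M)$-action on $\pi_q(F)$ induced by the principal bundle $PM$. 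After tensoring with $\Q$, this action factors through $\On(n)$, and one expects it to become trivial on $\pi_q(F)\otimes\Q$ (the $\pi_0$-contribution is killed by rationalization, and the action of the connected component $\SO(n)$ on rational homotopy of natural smoothing-theoretic spaces is trivial by a standard Lie-theoretic argument). It then suffices to show that $H^p(M;\pi_q(F)\otimes\Q)=0$ whenever $q-p=i$ and $0\le p\le n$.

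The second step is to identify $F$ and compute its rational homotopy in the relevant range. By the Alexander trick, $\Top(\D^n,\partial)\simeq *$, so restriction to the boundary gives a weak equivalence $\Top(\D^n)\simeq\Top(S^{n-1})$, and hence $F\simeq\Top(S^{n-1})/\On(n)$. The essential computational input is Morlet's theorem for the closed disc, $\Diff(\D^n,\partial)\simeq\Omega^{n+1}(\TO{n}{n})$, combined with the Farrell-Hsiang computation: in the Igusa stable range, $\pi_*\Diff(\D^n,\partial)\otimes\Q$ is concentrated in degrees $\equiv -1\pmod 4$ for $n$ odd (and vanishes for $n$ even). This pins down $\pi_q(\TO{n}{n})\otimes\Q$ in a specific range of degrees, and a smoothing-theoretic comparison exploiting the contractibility of $\D^n$ (comparing topological $\R^n$-bundles with topological $\D^n$-bundles, equivalently $S^{n-1}$-bundles) transfers the vanishing statement to $\pi_q(F)\otimes\Q$ in the range needed.

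With these computations in hand, I would finish by a bookkeeping case analysis: using the concentration of $\pi_q(F)\otimes\Q$ in a controlled set of degrees, the constraint $p\le n=\dim M$, the oddness of $n$, and the range $1<i<\min\{(n-8)/2,(n-5)/3\}$, one verifies that no nontrivial $E_2$-entry lies on the diagonal $q-p=i$. The main obstacle I foresee is the bridge between $\TO{\D^n}{n}$ and $\TO{n}{n}$, which are a priori distinct spaces; relating their rational homotopy groups in the correct range is where most of the technical work lies, likely requiring a careful Kirby-Siebenmann-style microbundle comparison or a germ-at-origin argument. A secondary point is pinning down the exact range for $i$, which is tighter than the Farrell-Hsiang range by one, reflecting a loss introduced by passing from the pseudoisotopy-theoretic computation to the section-space spectral sequence.
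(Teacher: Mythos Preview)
Your overall architecture---compute the rational homotopy of the fiber $F=\TO{\D^n}{n}$, then feed it into an obstruction-theoretic computation for the section space---matches the paper's, and your identification $F\simeq\Top(S^{n-1})/\On(n)$ via the Alexander trick is correct. But you stop one step short of the key identification, and this leads you in the wrong direction.

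The missing step is that $\Top(S^{n-1})/\On(n)\simeq\Top(S^{n-1},\ast)/\On(n-1)\simeq\TO{n-1}{n-1}$, the last equivalence by stereographic projection. (Equivalently: the paper first rewrites $\TO{\D^n}{n}\cong\TO{\D^n,\ast}{n-1}$ by using the transitive $\On(n)$-action on $\partial\D^n$, then applies the Alexander trick to the pointed groups.) Once you have $F\simeq\TO{n-1}{n-1}$, Morlet gives $\pi_{n+j}(F)\cong\pi_j\Diff(\D^{n-1},\partial)$ for $j\geq 0$, and since $n$ is odd, $n-1$ is \emph{even}, so Farrell--Hsiang yields $\pi_j\Diff(\D^{n-1},\partial)\otimes\Q=0$ throughout the stable range $0<j<\min\{\frac{n-8}{2},\frac{n-5}{3}\}$. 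Combined with the finiteness of exotic-sphere groups (giving $\pi_q(F)\otimes\Q=0$ for $q\leq n$), one obtains $\pi_q(F)\otimes\Q=0$ for \emph{all} $1\leq q<n+\min\{\frac{n-8}{2},\frac{n-5}{3}\}$. The section-space step then loses only $\dim M=n$, and no case analysis or local-coefficient discussion is needed: the coefficient groups themselves vanish rationally.

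By contrast, your proposed ``bridge'' from $F$ to $\TO{n}{n}$ is a red herring. There is no natural map between them inducing a rational equivalence in the relevant range, and $\TO{n}{n}$ for $n$ odd has genuinely nontrivial rational homotopy above degree $n$ (via $\Diff(\D^n,\partial)$), which is exactly what you were bracing for with your ``concentration in degrees $\equiv -1\pmod 4$'' remark. The dimension shift $n\mapsto n-1$ is what makes the parity work; it also explains the apparent tightening of the range (it is the Farrell--Hsiang range for $\D^{n-1}$, not a loss from the spectral sequence).
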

Throughout 
we fix a homeomorphism $\varphi:\mbox{int }\D^n\to \R^n$ defined
by 
\beq
\varphi(v)=\frac{v}{1-||v||}.
\eeq
Conjugation by $\varphi$ defines a continuous injective map 
$\mbox{int}_{\varphi}:\Top(\D^n)\to\Top(n)$, that is
\beq
\mbox{int}_{\varphi}(f)=\varphi\circ f|_{\mbox{int}(\D^n)}\circ\varphi^{-1}
\eeq
Clearly this map is equivariant with respect to both the right and the left
action of 
$\On(n)$, and hence it induces a left equivariant map between orbit spaces that
we keep denoting by 
$\mbox{int}_{\varphi}:\TO{\D^n}{n}\to\TO{n}{n}$. 
Furthermore
$\mbox{int}_{\varphi}$ gives rise to a map between spaces of
sections
\beq
\mbox{int}_{\varphi}:
\left(\Gamma\left(\calB_n^{\D}(M)\right),s_1\right)\to
\left(\Gamma\left(\calB_n(M)\right),s_0\right).
\eeq
\begin{lemma}\label{Lemma 1}
Let $g\in\metnc(M)$. Assume that $\pi_1M$ is a hyperbolic group.
Then for all $i \geq 0$ and
$y\in 
\ker\Phi^g_*\circ d_*\subset\pi_{i+1}\TDiffn{M}{M}$, there exists 
$\bar{y}\in\pi_{i+1}\Gamma\left(\calB_{n}^{\D}(M)\right)$ such that
\beq
\mbox{int}_{\varphi\ast}(\bar{y})=\mu_*(y)\in\pi_{i+1}\Gamma(\calB_n(M))
\eeq
\end{lemma}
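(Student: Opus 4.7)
The aim is to construct $\bar y$ explicitly from $y$ using the hypothesis, via a ``disc version'' of Morlet's map.

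First, represent $y$ via the Kan fibration $\sDiff_0(M)\to\sTop_0(M)\to\sTop_0(M)/\sDiff_0(M)$: lift a based representative of $y$ to a family $\{h_t\}_{t\in D^{i+1}}$ of self-homeomorphisms of $M$ with $h_0=\mbox{id}$ and $h_t\in\Diff_0(M)$ for $t\in\partial D^{i+1}$, the restriction to $\partial D^{i+1}$ representing $d_*(y)$. The hypothesis $\Phi^g_*\circ d_*(y)=0$ yields a family $\{g_t\}_{t\in D^{i+1}}\subset\metnc(M)$ with $g_0=g$ and $g_t=(h_t)_*g$ on $\partial D^{i+1}$. The key geometric input is that $\widetilde{\exp}^{g_t}_p\colon T_pM\to\widetilde M$ is a diffeomorphism, and $(\widetilde M,\widetilde{g_t})$ is Gromov hyperbolic, hence admits a canonical compactification $\overline{\widetilde M}\cong D^n$ (the Gromov boundary being a topological sphere, by hyperbolicity of $\pi_1M$).

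For each $t\in D^{i+1}$ and $p\in M$ I construct a section $\sigma_t(p)$ of $\calB_n^\D(M)$ as follows. Let $\widetilde{h_t}\colon\widetilde M\to\widetilde M$ be the continuous family of lifts with $\widetilde{h_0}=\mbox{id}$. Because $h_t$ is homotopic to the identity through a homotopy on the compact manifold $M$, the map $\widetilde{h_t}$ moves every point by a uniformly bounded $\widetilde{g_t}$-distance, so it extends to the identity on $\partial_\infty\widetilde M$ and therefore to a self-homeomorphism of $\overline{\widetilde M}$. Transporting via $\widetilde{\exp}^{g_t}_p$ and identifying with the standard $D^n$ via a frame at $p$ gives $\sigma_t(p)$, well-defined modulo the $\On(n)$-action on frames. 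For $t\in\partial D^{i+1}$, the equality $g_t=(h_t)_*g$ makes $\widetilde{h_t}$ an isometry $(\widetilde M,\widetilde g)\to(\widetilde M,\widetilde{g_t})$, and a direct computation shows $(\widetilde{\exp}^{g_t}_{h_t(p)})^{-1}\circ\widetilde{h_t}\circ\widetilde{\exp}^g_p=dh_t(p)$, a linear map; after passing to the $\On(n)$-quotient and using contractibility of $\GL(n)/\On(n)$, the section $\sigma_t$ is homotopic to $s_1$, and a standard rel-boundary adjustment yields the element $\bar y\in\pi_{i+1}\Gamma(\calB_n^\D(M))$.

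The relation $\mbox{int}_{\varphi*}(\bar y)=\mu_*(y)$ is verified by direct inspection: $\mbox{int}_\varphi$ restricts $\sigma_t(p)$ to the interior of $D^n$ and conjugates by $\varphi$, producing exactly the global self-homeomorphism of $T_pM\cong\R^n$ that Morlet's construction (via Lemmas \ref{R^t} and \ref{the map S}) attaches to the topological derivative of $h_t$. The hard part will be carrying out the identification $\overline{\widetilde M}\cong D^n$ continuously in $t$ and in a manner compatible with the $\On(n)$-action on frames---the Gromov compactification is $\pi_1M$-equivariant and quasi-isometry invariant but not a priori compatible with orthogonal changes of frame on $T_pM$---and propagating this compatibility through the construction so that $\sigma_t$ is a genuine section and the final equality holds.
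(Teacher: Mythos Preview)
Your geometric idea---lift $h_t$ to $\widetilde{M}$, extend to the Gromov compactification $\overline{\widetilde{M}}$ using hyperbolicity of $\pi_1M$, and conjugate by exponential maps of the conjugate-point-free metrics $g$ and $g_t$---is precisely what the paper does. The difference is in the packaging, and that packaging is exactly what dissolves the concern you raise at the end.

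Rather than building sections $\sigma_t\in\Gamma(\calB_{n}^{\D}(M))$ directly (which, as you observe, forces one to make the identification $\overline{\widetilde M}\cong \D^n$ compatible with the $\On(n)$-action on frames), the paper constructs fibre-preserving disc-bundle isomorphisms $\overline{\Upsilon}_z:\D M\to\D M$ covering $f_z$, namely
\[
\overline{\Upsilon}_z=(\Exp^{g_z}_{\Gamma}\circ\rho_{g_z})^{-1}\circ(\widetilde{f_z}\times_{\Gamma}\overline{f_z})\circ\Exp^{g}_{\Gamma},
\]
where $\Exp^{g}_{\Gamma}:\D M\to\widetilde{M}\times_{\Gamma}\overline{M}$ is the compactified exponential for the \emph{fixed} metric $g$ and $\rho_{g_z}:(TM,g)\to(TM,g_z)$ is the canonical fibrewise isometry between Euclidean structures. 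Since $\D M$ and $\overline{M}$ are built once and for all from $g$, and the change of Euclidean structure is absorbed by $\rho_{g_z}$, no frame ever has to be chosen; $\overline{\Upsilon}_z$ is a globally defined bundle map. These assemble into a class $[\overline{\Upsilon}]\in\pi_{i+1}\bigl(\overline{\textbf{R}}_{\D}^T(M,g)/\overline{\textbf{R}}_{\D}^o(M,g)\bigr)$, and the passage to $\pi_{i+1}\Gamma(\calB_{n}^{\D}(M))$ is then made through the already-established map $S_{\D\ast}\circ\sigma_{\D\ast}^{-1}$, which packages the $\On(n)$-quotient uniformly. The equation $\mbox{int}_{\varphi\ast}(\bar y)=\mu_*(y)$ falls out of a commutative square relating $\overline{\textbf{R}}_{\D}^T/\overline{\textbf{R}}_{\D}^o$ to $\overline{\textbf{R}}^t/\overline{\textbf{R}}^d$ under ``restrict to the interior and conjugate by $\phi$'', together with your boundary computation showing that on $\partial\Delta^{i+1}$ the map $\overline{\Upsilon}_z$ is a fibrewise isometry.

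So your plan is correct and the geometry is the same; the remaining difficulty you flag is resolved by working one level up, with representations of the disc bundle rather than with sections.
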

\begin{proof}[\textbf{Proof of Theorem \ref{main} assuming Lemma \ref{Lemma 2} and Lemma \ref{Lemma 1}}]
If $n$ is even the orbit map is obviously rationally 
injective by (\ref{calculation}).
If $n$ is odd then Lemma \ref{Lemma 2} and Lemma \ref{Lemma 1}
imply that $\ker\left(\Phi^g_*\circ d_*\right)\otimes id_{\Q}=0$ for $1<i<\min\{\frac{n-10}{2},\frac{n-8}{3}\}$.
But it is known (see e.g. \cite{F-ICTP}, \cite{FJ93}) that
$\pi_i\Top_0(M)\otimes\Q=0$ provided
$M$ is aspherical and satisfies the strong Borel conjecture
and $1<i<\min\{\frac{n-7}{2},
\frac{n-4}{3}\}$. That manifolds with hyperbolic
fundamental group satisfy both the strong Borel conjecture and Conjecture 2 in \cite[p.326]{FH78}, follows from \cite{BLR} and \cite{Bartels}.
Hence 
$d_*\otimes id_{\Q}:\pi_{i+1}\frac{\Top_0(M)}{\Diff_0(M)}\otimes\Q\to\pi_i\Diff_0(M)\otimes\Q$ is an
isomorphism when $1<i<\min\{\frac{n-9}{2},\frac{n-7}{3}\}$. Therefore the orbit map 
$\Phi_*^g\otimes id_{\Q}:\pi_i\Diff_0(M)\otimes\Q
\to\pi_i\metnc(M)\otimes\Q$ must be injective for $1<i<\min\{\frac{n-10}{2},\frac{n-8}{3}\}$.
\end{proof}
The rest of the paper is devoted to the proof of Lemma \ref{Lemma 2}
and Lemma \ref{Lemma 1}.

\section{Proof of Lemma \ref{Lemma 2}}\label{prooflemma2}
The rationalization of a nilpotent space $X$ will be
denoted by $X_{(0)}$. A map $f:X\to Y$ 
between nilpotent spaces is 
\textit{rationally $k$-connected} if the induced map 
$f_{(0)}:X_{(0)}\to Y_{(0)}$ between their rationalizations is
$k$-connected.

We have the following general fact (compare \cite[Lemma 3.10]{BLAnnals}).
\begin{lemma}\label{general}
Let $E_i\to B$, $i=1,2$ be two fibrations over a finite
CW-complex $B$ whose fibers $F_i$, $i=1,2$,
over a point $\ast\in B$ are simply connected
spaces.
Let $\beta:E_1\to E_2$ be a continuous map over the identity on
$B$ such that the restriction $\beta:F_1\to F_2$ is rationally $k$-connected. 
Then the induced map $\Gamma(\beta):\Gamma(E_1)\to\Gamma (E_2)$ between
section spaces is rationally $(k-\mbox{dim } B)$-connected.
In particular,  if $\pi_{j}F_1\otimes\Q=0$ for $1< j\leq\ell$,
%then $\pi_1\Gamma(E_1)$ is finite and 
then $\pi_j\Gamma(E_1)\otimes\Q=0$ for $1< j\leq\ell-\mbox{dim }B$.
\end{lemma}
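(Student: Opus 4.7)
The plan is to prove Lemma \ref{general} by induction on the skeleta of $B$, following a standard obstruction-theoretic template. Writing $B^{(d)}$ for the $d$-skeleton and $\Gamma^{(d)}(E_i):=\Gamma(E_i|_{B^{(d)}})$, I shall establish by induction on $0\le d\le\dim B$ the claim that the restricted map $\Gamma(\beta)|_{B^{(d)}}\colon \Gamma^{(d)}(E_1)\to\Gamma^{(d)}(E_2)$ is rationally $(k-d)$-connected. Taking $d=\dim B$ gives the first assertion of the lemma, and the ``in particular'' statement follows by running the same skeletal induction applied to $E_1$ alone, now tracking the range $1<j\le\ell-d$ in which $\pi_j\Gamma^{(d)}(E_1)\otimes\Q$ vanishes.

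The base case $d=0$ is immediate: $B^{(0)}$ is a finite set of points, so $\Gamma^{(0)}(\beta)$ is just a finite product of copies of $\beta\colon F_1\to F_2$, which is rationally $k$-connected by hypothesis. For the inductive step from $d-1$ to $d$, decompose $B^{(d)}=B^{(d-1)}\cup\bigsqcup_\alpha D^d_\alpha$. Restriction yields a Hurewicz fibration
\beq
F^{(d)}_i\;\longrightarrow\;\Gamma^{(d)}(E_i)\;\longrightarrow\;\Gamma^{(d-1)}(E_i),
\eeq
whose fiber $F^{(d)}_i$ over a chosen section parametrises extensions of that section over the attached $d$-cells. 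Each $D^d_\alpha$ being contractible, $E_i$ trivialises over it, and extending across $D^d_\alpha$ reduces to filling in a map of pairs $(D^d,S^{d-1})\to(F_i,\ast)$; hence $F^{(d)}_i$ is weakly equivalent to $\prod_\alpha\Omega^d F_i$, with the fiberwise map induced by $\Omega^d\beta$. Because $F_i$ is simply connected and $\beta$ is rationally $k$-connected, $\Omega^d\beta$ (and any finite product of copies of it) is rationally $(k-d)$-connected. Combined with the inductive hypothesis that the base map is rationally $(k-d+1)$-connected, the five-lemma applied to the long exact sequences of the two fibrations, tensored with $\Q$, yields the desired $(k-d)$-connectedness on total spaces, closing the induction.

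The step requiring the most care is legitimising the rational five-lemma: one must verify that the section spaces $\Gamma^{(d)}(E_i)$ are nilpotent at each stage, so that $-\otimes\Q$ is exact on the relevant long exact sequences of homotopy groups and behaves well in the inductive comparison. Because $B$ is a finite CW complex and the fibers $F_i$ are simply connected, the tower produced by the induction has only finitely many stages, each with simply connected fiber $\prod_\alpha\Omega^d F_i$, and standard nilpotency criteria then apply componentwise. Some minor bookkeeping is also needed to handle the fact that $\Gamma^{(d)}(E_i)$ may be disconnected: the connectivity statement is applied component by component, and the statement $\pi_j\Gamma(E_1)\otimes\Q=0$ in the ``in particular'' part is interpreted on the component containing the chosen basepoint section, which is how the lemma is used in the proof of Lemma~\ref{Lemma 2}.
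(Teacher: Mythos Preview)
Your skeletal-induction argument is essentially correct and follows a genuinely different route from the paper. The paper instead \emph{fiberwise rationalizes} the two fibrations (via Bousfield--Kan/Llerena), invokes M{\o}ller's theorem that taking section spaces commutes with fiberwise localization, and then quotes the purely integral connectivity statement for section spaces from Burghelea--Lashof \cite[Lemma~3.10]{BLAnnals}; it also inserts a separate reduction (via singular complexes) to handle fibers not of CW homotopy type. Your approach effectively reproves the Burghelea--Lashof lemma with the rationalization folded into each inductive step through a rational five-lemma, which is more self-contained and avoids importing the fiberwise-localization machinery, at the cost of having to argue nilpotency of the section spaces directly.

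One small slip: you assert that the restriction-fibration fibers $\prod_\alpha\Omega^d F_i$ are simply connected, but for $d\ge 2$ one has $\pi_1(\Omega^d F_i)\cong\pi_{d+1}F_i$, which need not vanish. The nilpotency conclusion you draw nonetheless holds, since $\Omega^d F_i$ is an $H$-space and hence simple, so the standard criterion (nilpotent base, simple fiber, finitely many stages) still yields componentwise nilpotence of $\Gamma^{(d)}(E_i)$ and legitimises tensoring the long exact sequences with $\Q$.
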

\begin{proof}
Assume first that the fibers $F_i$, $i=1, 2$ have the 
homotopy type of a CW-complex. Fiberwise rationalize 
the two fibrations
(see \cite[p.40]{BousfieldKan} or \cite{Llerena})
to obtain another pair of fibrations
$F_{i(0)}\to \mathcal{E}_i\to B$ over $B$ and a fiber preserving
map $\mathcal{E}_{1}\to\mathcal{E}_{2}$. Since the restriction 
$\beta_{(0)}:F_{1(0)}\to F_{2(0)}$ of this
map is assumed to be $k$-connected, we have that
the induced map 
$\Gamma(\mathcal{E}_1)\to\Gamma(\mathcal{E}_2)$ 
between spaces of sections is
$(k-\mbox{dim }B)$-connected (\cite[Lemma 3.10]{BLAnnals}).
The lemma now follows in this case since $\Gamma(\mathcal{E}_i)$,
$i=1,2$ is nilpotent and fiberwise localization
commutes with taking spaces of sections, as shown by M\o ller in
\cite[Theorem 5.3]{Moller}.

In order to handle the case when the fibers $F_i$ 
may not have the homotopy type of a CW-complex, let $\textbf{S}$ 
be the functor that assigns
to a space its singular complex. 
%For every space $X$, the geometric
%realization $|\textbf{S}X|$ is a $CW$-complex. Moreover if
%$Z\to X\to Y$ is a fibration, then 
%$|\textbf{S}Z|\to|\textbf{S}X|\to|\textbf{S}Y|$ is also a fibration
%where every space is now a CW complex.
We let $\mathcal{G}_i\to B$ be the
pull-back of the fibration 
$|\textbf{S}E_i|\to |\textbf{S}B|$ along a fixed homotopy
inverse $\varphi$ of the natural map 
$\psi:|\textbf{S}B|\to B$. Then there is a commutative
diagram
\beq
\xymatrix{
\mathcal{G}_1\ar[r]^{\overline{\beta}}\ar[d] & \mathcal{G}_2\ar[d]\\
\varphi^*\psi^*E_1\ar[r]\ar[d] & \varphi^*\psi^*E_2\ar[d]\\
E_1\ar[r]^{\beta} &E_2,
}
\eeq
where the vertical maps cover the identity map on $B$ and restrict
to weakly homotopy equivalences on the fibers. Since the
restriction $\beta:F_1\to F_2$ is rationally $k$-connected,
so is the restriction of $\overline{\beta}$ to the fiber, and hence
the induced map $\Gamma(\overline{\beta}):
\Gamma(\mathcal{G}_1)\to\Gamma(\mathcal{G}_2)$ is rationally
$(k-\mbox{dim }B)$-connected by the previous case. Consequently 
$\Gamma(\beta)$ is rationally $(k-\mbox{dim }B)$-connected.
\end{proof}
%\begin{rmk}
%This lemma can be stated and proved more generally for 
%localizations at different primes. We leave
%it to the interested reader.
%\end{rmk}
Fix a point $\ast\in\partial\D^n\subset\D^n$. 
Let $\Top(\D^n,\ast)$ denote the (closed) subgroup of $\Top(\D^n)$
consisting of all homeomorphisms $f:\D^n\to\D^n$ such that
$f(\ast)=\ast$. Note that $\On(n-1)$, identified with the subgroup of $\On(n)$ that fixes $\ast$,
acts naturally (on the right) on
$\Top(\D^n,\ast)$
\begin{lemma}
The orbit spaces $\TO{\D^n}{n}$ and $\TO{\D^n,\ast}{n-1}$ are
homeomorphic.
\end{lemma}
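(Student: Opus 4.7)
The plan is to build the homeomorphism directly from the inclusions of subgroups $\On(n-1)\subset \On(n)$ and $\Top(\D^n,\ast)\subset\Top(\D^n)$. Since any self-homeomorphism of $\D^n$ preserves $\partial\D^n$ and $\On(n)$ acts transitively on $\partial\D^n=S^{n-1}$ with stabilizer $\On(n-1)$ at $\ast$, the composition of the inclusion $\Top(\D^n,\ast)\hookrightarrow\Top(\D^n)$ with the quotient map onto $\Top(\D^n)/\On(n)$ will descend to a continuous map
\beq
\Phi:\Top(\D^n,\ast)/\On(n-1)\longrightarrow \Top(\D^n)/\On(n),
\eeq
and my goal is to show $\Phi$ is a homeomorphism.

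The first step is to check that $\Phi$ is a bijection. For injectivity: if $f_1,f_2\in\Top(\D^n,\ast)$ and $f_1=f_2\circ A$ for some $A\in\On(n)$, then evaluating at $\ast$ forces $A\ast=\ast$, so $A\in\On(n-1)$. For surjectivity: given $h\in\Top(\D^n)$, pick $A\in\On(n)$ with $A\ast=h^{-1}(\ast)$, which is possible by transitivity of $\On(n)$ on $S^{n-1}$; then $h\circ A$ lies in $\Top(\D^n,\ast)$ and represents $[h]$ in $\Top(\D^n)/\On(n)$.

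The second step is to produce a continuous inverse. Using a local section $\sigma:U\to\On(n)$ of the principal $\On(n-1)$-bundle $\On(n)\to S^{n-1}$, $A\mapsto A\ast$, defined on an open neighborhood of any given point of $S^{n-1}$, I define $\Psi([h]):=[h\circ\sigma(h^{-1}(\ast))]$ whenever $h^{-1}(\ast)\in U$. A short calculation shows $\Psi$ is independent both of the choice of representative $h$ (replacing $h$ by $h\circ C$ for $C\in\On(n)$ shifts the defining element by a factor of $\On(n-1)$ acting on the right) and of the particular local section chosen (two sections over the same point differ by right multiplication by an element of $\On(n-1)$). Hence these local formulas patch together to a globally defined continuous map $\Psi:\Top(\D^n)/\On(n)\to\Top(\D^n,\ast)/\On(n-1)$, and one then verifies directly that $\Psi$ is a two-sided inverse of $\Phi$.

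The main obstacle is simply managing the quotient topologies correctly; once the existence of local sections for the principal $\On(n-1)$-bundle $\On(n)\to S^{n-1}$ is invoked, the remaining work is routine manipulation of the group actions. No use of the smooth or Riemannian structure is needed, only the elementary topology of the $\On(n)$-action on $\partial\D^n$.
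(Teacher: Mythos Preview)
Your proof is correct and follows essentially the same route as the paper: both define the map via the inclusion $\Top(\D^n,\ast)\hookrightarrow\Top(\D^n)$ and verify it is a continuous bijection using transitivity of $\On(n)$ on $\partial\D^n$ with stabilizer $\On(n-1)$. The only (minor) difference is the final step: where you construct a continuous inverse by patching local sections of the principal bundle $\On(n)\to S^{n-1}$, the paper instead observes that the orbit map $\Top(\D^n)\to\Top(\D^n)/\On(n)$ is closed (since $\On(n)$ is compact and $\Top(\D^n,\ast)$ is closed in $\Top(\D^n)$), so the continuous bijection is automatically a closed map and hence a homeomorphism --- a slightly quicker finish than yours.
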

\begin{proof}
The subgroup inclusion $\Top(\D^n,\ast)\subset\Top(\D^n)$
induces a continuous injective map 
$\TO{\D^n,\ast}{n-1}\xrightarrow{\iota}\TO{\D^n}{n}$. Note that
for each $f\in\Top(\D^n)$, there exists an orthogonal transformation
$A\in\On(n)$ such that $A(f(\ast))=\ast$. It is easy to see that
the assignment $f\mapsto A\circ f$ induces an inverse
to $\iota$. Thus $\iota$ is a continuous bijection. But since
the orbit map $\Top(\D^n)\to\TO{\D^n}{n}$ is a closed map, $\iota$ is a 
closed map as well. Hence it is a homeomorphism.
\end{proof}
\begin{proof}[\textbf{Proof of Lemma \ref{Lemma 2}}:]
Since the restriction map $\Top(\D^n,\ast)\to\Top(S^{n-1},\ast)$ is a homotopy equivalence
(coning provides a homotopy inverse, by the Alexander trick), the induced map 
$\TO{\D^n,\ast}{n-1}\to\frac{\Top(S^{n-1},\ast)}{\On(n-1)}$ is a weak homotopy equivalence, which can be
seen by applying the Five Lemma to long exact sequences in homotopy associated to the fibrations
$\Top(\D^n,\ast)\to\TO{\D^n,\ast}{n-1}$ and $\Top(S^{n-1},\ast)\to\frac{\Top(S^{n-1},\ast)}{\On(n)}$.
Thus, as $\TO{\D^n}{n}$ is homeomorphic to $\TO{\D^n,\ast}{n-1}$,
it is weakly homotopy equivalent to $\frac{\Top(S^{n-1},\ast)}{\On(n-1)}\simeq\TO{n-1}{n-1}$, where the last 
identification is done via the stereographic projection.
It then follows from \cite[Essay V \S 5]{KS} and \cite{kervaire-milnor}, that  $\TO{\D^n}{n}$ is
simply connected and 
$\pi_j\TO{\D^n}{n}\otimes\Q=0$ for
all $1\leq j\leq n$. 
Now, by Morlet's comparison theorem we have that for all $j\geq n-1$
\beq
\pi_{j+1}\TO{n-1}{n-1}\simeq\pi_{j-(n-1)}\Diff(\D^{n-1},\partial).
\eeq
But Farrell and Hsiang \cite{FH78} showed that when $n$ is 
odd
$\pi_{j-(n-1)}\Diff(\D^{n-1},\partial)\otimes\Q=0$ if 
$0<j-(n-1)<\min\{\frac{n-8}{2},\frac{n-5}{3}\}$. Hence $\pi_i\Top(D^n)/\On(n)\otimes\Q=0$ for all $1\leq i<n+\min\{\frac{n-8}{2},\frac{n-5}{3}\}$. The result
now follows from Lemma \ref{general}.
\end{proof}
%%%%%%%%%%%%%%%%%%%%%%%%%%%%%%%%%%%%%%%%%%%%%%%%%%%%
%%%%%%%%%%%%%%%%%%%%%%%%%%%%%%%%%%%%%%%%%%%%%%%%%%%%
%%%%%%%%%%%%%%%%%%%%%%%%%%%%%%%%%%%%%%%%%%%%%%%%%%%%
%%%%%%%%%%%%%%%%%%%%%%%%%%%%%%%%%%%%%%%%%%%%%%%%%%%%
\section{Proof of Lemma \ref{Lemma 1}}\label{proofLemma1}
Let $(M,g)$ be a closed Riemannian manifold without conjugate points. Its 
tangent bundle $TM\xrightarrow{\pi} M$ is regarded as a Euclidean vector bundle
with respect to the metric $g$ (see \cite[p.21-22]{milnor-stasheff}).

The
tangent disc bundle $\D M$ is the collection of all vectors in $TM$ 
of length less than or equal to $1$. Let $\mbox{int}\D M\subset\D M$ be 
the subbundle of tangent vectors of length strictly less than $1$.

Let $\overline{\textbf{R}}_{\D}^T(M,g)$ denote the simplicial set
whose $k$-simplices are topological disc bundle isomorphisms 
\beq
\bar{f}:\Delta^k\times\D M
\to\Delta^k\times\D M
\eeq
fixing the zero section, which commute with the projection to $\Delta^k$ and cover
some homeomorphism $f:\Delta^k\times M\to\Delta^k\times M$.

A simplicial subset
$\overline{\textbf{R}}_{\D}^o(M,g)\subset\overline{\textbf{R}}_{\D}^T(M,g)$
is defined by requiring that
\beq
\bar{f}|_{\Delta^k\times\mbox{int}(\D M)}:
\Delta^k\times \D M\to
\Delta^k\times \D M
\eeq
is a fiberwise isometry.

The simplicial sets $\underline{\textbf{R}}_{\D}^o(M,g)$ and
$\underline{\textbf{R}}_{\D}^T(M,g)$ are defined similarly but they must
cover the identity map on $\Delta^k\times M$.
Note that the natural inclusion between simplicial sets
\beq
\sigma_{\D}:
\underline{\textbf{R}}_{\D}^T(M,g)/\underline{\textbf{R}}_{\D}^o(M,g)\to
\overline{\textbf{R}}_{\D}^T(M,g)/\overline{\textbf{R}}_{\D}^o(M,g)
\eeq
is a homotopy equivalence, by the same
argument given in \cite[p. 12]{BLTrans}. 

The map  
$S:\underline{\textbf{R}}^T(M)/\underline{\textbf{R}}^o(M)
\to\textbf{S}\Gamma(\calB_n(M))$ from Section \ref{Morlet} can
easily be adapted (see Remark \ref{rmk3} in Appendix) to obtain a corresponding map 
\beq
S_{\D}:
\underline{\textbf{R}}_{\D}^T(M,g)/\underline{\textbf{R}}_{\D}^o(M,g)
\to\textbf{S}\Gamma(\calB^{\D}_{n}(M)).
\eeq

Now fix the fiber preserving homeomorphism
$\phi:\mbox{int}(\D M)\to TM$ defined by 
\beq
\phi(v)=\frac{v}{1-\sqrt{\langle v,v\rangle_{\pi(v)}}}.
\eeq
We can then form a commutative diagram
\beq
\xymatrix{
\overline{\textbf{R}}_{\D}^T(M,g)/\overline{\textbf{R}}_{\D}^o(M,g)\ar[d]^{\mbox{int}_{\phi}} &  \underline{\textbf{R}}_{\D}^T(M,g)/\underline{\textbf{R}}_{\D}^o(M,g)\ar[l]_{\sigma_{\D}}\ar[d]^{\mbox{int}_{\phi}}\ar[r]^-{S_{\D}} & \textbf{S}\Gamma(\calB_{n}^{\D}(M))\ar[d]^{\mbox{int}_{\varphi}}\\
\overline{\textbf{R}}^t(M)/\overline{\textbf{R}}^d(M) & \underline{\textbf{R}}^T(M)/\underline{\textbf{R}}^o(M)
\ar[l]_{\sigma}\ar[r]^-{S} & 
\textbf{S}\Gamma(\calB_{n}(M))
}
\eeq
where the vertical maps  are induced by restriction of a 
homeomorphism of $\D^n$ (or $\D M$) to its interior and then 
conjugation by $\varphi$ (or $\phi$) and $\sigma$
is the homotopy equivalence (\ref{other maps}) of the Appendix below. 
Taking homotopy groups yields the following commutative diagram

\begin{equation}\label{repsD}
\xymatrixcolsep{5pc}\xymatrix{
\pi_{i+1}\overline{\textbf{R}}^T_{\D}(M,g)/\overline{\textbf{R}}^o_{\D}(M,g)\ar[r]^-{S_{\D\ast}\circ\sigma_{\D\ast}^{-1}}\ar[d]_{\mbox{int}_{\phi\ast}} & 
\pi_{i+1}\textbf{S}\Gamma(\calB_n^{\D}(M))\ar[d]^{\mbox{int}_{\varphi}\ast}\\
\pi_{i+1}\overline{\textbf{R}}^t(M)/\overline{\textbf{R}}^d(M)
\ar[r]^-{S_*\circ\sigma_*^{-1}} &
\pi_{i+1}\textbf{S}\Gamma(\calB_n(M))
}
\end{equation}
\begin{proof}[\textbf{Proof of Lemma \ref{Lemma 1}}]
Let $f:\Delta^{i+1}\times M\to\Delta^{i+1}\times M$ be 
an $(i+1)$-simplex of $\sTop_0(M)$ representing an element $[f]$ in
$\pi_{i+1}(\sTop_0(M),\sDiff_0(M))\simeq\pi_{i+1}\frac{\Top_0(M)}{\Diff_0(M)}$,
such that $\Phi^g_{\ast}\circ d_*[f]=0$, i.e. 
$f_z:=f|_{\{z\}\times M}\in\Diff_0(M)$
for all $z\in\partial\Delta^{i+1}$ and there exists a continuous map $\Delta^{i+1}\to\metnc(M)$,
$z\mapsto g_z$, 
with the property that $g_z=(f_z)_*g$ for all $z\in \partial\Delta^{i+1}$.
We shall exhibit an element in 
$\pi_{i+1}\Gamma(\calB^{\D}_n(M))$ whose image under
$\mbox{int}_{\varphi*}$ coincides with
$\mu_*[f]$ (refer to the notation in Section \ref{Morlet}).

Fix a universal cover $\widetilde{M}$ of $M$ and let $\Gamma$ be its group of deck transformations.
The homeomorphism $\Delta^{i+1}\times M\to\Delta^{i+1}\times M$ sending
$(z,x)$ to $(z,f_z(x))$ lifts to a homeomorphism 
$\Delta^{i+1}\times\widetilde{M}\to\Delta^{i+1}\times\widetilde{M}$ which in turn
restricts to a homeomorphism 
$\widetilde{f}_z:\widetilde{M}\to\widetilde{M}$ for each $z\in\Delta^{i+1}$.

The universal cover
$\widetilde{M}$ acquires an $(i+1)$-parameter family
of complete Riemannian metrics 
$\widetilde{g}_z=p^*g_z$ without conjugate points. Note that for all $z\in\Delta^{i+1}$, 
there exists a canonical vector bundle isomorphism (over the identity)
$\rho_{g_z}:TM\to TM$ such that $\rho_{g_z}$ is  a fiberwise isometry between the fixed Euclidean
structure on $TM$ (i.e. $g$) and the one given by the metric $g_z$ (see \cite[p.24]{milnor-stasheff}). This can
be thought of as an $(i+1)$-simplex  $\mathfrak{R}\in\underline{\textbf{R}}^d(M)$
defined by $\mathfrak{R}(z,v)=(z,\rho_{g_z}(v))$, for all $(z,v)\in\Delta^{i+1}\times TM$. 

Let $\exp^{\tilde{g}}:T\widetilde{M}\to\widetilde{M}\times\widetilde{M}$ be the exponential map with respect to
the metric $\tilde{g}$, i.e. $\exp^{\tilde{g}}(v)=(\mbox{foot}(v),\gamma_v(1))$, where $\gamma_v$ is the
unique geodesic with initial velocity $v$. The corresponding map mod $\Gamma$ is denoted by
$\exp^g_{\Gamma}:TM\to\widetilde{M}\times_{\Gamma}\widetilde{M}$.

Now for each $z\in\Delta^{i+1}$ define the following
map:
\beq
\Upsilon_z:TM\xrightarrow{\exp_{\Gamma}^{g}}
\widetilde{M}\times_{\Gamma}\widetilde{M}
\xrightarrow{\widetilde{f_z}\times_{\Gamma}\widetilde{f_z}}
\widetilde{M}\times_{\Gamma}\widetilde{M}
\xrightarrow{(\exp_{\Gamma}^{g_z}\circ\rho_{g_z})^{-1}}
TM
\eeq
Because the exponential map of a Riemannian metric on $\widetilde{M}$ without conjugate points is a diffeomorphism, these maps are $\R^n$-bundle isomorphisms  which cover $f_z$.
Moreover, when $z\in \partial\Delta^{i+1}$, the map $\Upsilon_z$ is 
a linear isomorphism.
%fiberwise isometry of $TM$ (w.r.t. to the 
%fixed Euclidean structure). 
Thus we obtain
a class $[\Upsilon]$ in the relative
homotopy group 
$\pi_{i+1}\left(\overline{\textbf{R}}^t(M),\overline{\textbf{R}}^d(M)\right)$. Furthermore,
by the definition of $\delta$ (see Section \ref{Morlet}), we
have the equation
\begin{equation}\label{equation}
\delta_*[f]=[\mathfrak{R}\circ\Upsilon]=[\mathfrak{R}]+[\Upsilon]=[\Upsilon]
\in\pi_{i+1}\left(\overline{\textbf{R}}^t(M),\overline{\textbf{R}}^d(M)\right)
\simeq\pi_{i+1}\overline{\textbf{R}}^t(M)/\overline{\textbf{R}}^d(M),
\end{equation}
since $[\mathfrak{R}]=0$.

Now we exploit the geometry at infinity of the
universal cover of $M$ to construct topological representations
of the disc bundle of $M$.

Observe that since $M$ is compact, the metrics $\widetilde{g_z}$ are all
quasi-isometric to $\widetilde{g}=p^*g$, and since the fundamental
group of $M$ is hyperbolic we can extend each 
self-homeomorphism
$\widetilde{f_z}$ to a self-homeomorphism 
$\overline{f_z}:\overline{M}\to\overline{M}$ of 
the geometric compactification $\overline{M}$ of $\widetilde{M}$,
whose points at infinity are represented by $g$-quasi-geodesic rays in
$\widetilde{M}$ (see \cite[p. 400-405]{bridson}).

This yields a homeomorphism
\beq
\overline{\Upsilon}_z:\D M\xrightarrow{\Exp_{\Gamma}^{g}}
\widetilde{M}\times_{\Gamma}\overline{M}
\xrightarrow{\widetilde{f_z}\times_{\Gamma}\overline{f_z}}
\widetilde{M}\times_{\Gamma}\overline{M}
\xrightarrow{(\Exp_{\Gamma}^{g_z}\circ\rho_{g_z})^{-1}}
\D M
\eeq
which clearly covers $f_z$, and is a fiberwise isometry for $z\in\partial\Delta^{i+1}$.
Here $\Exp^g_{\Gamma}$ 
is given by the following map $\mbox{mod }\Gamma$
\beq
v\mapsto
\begin{cases}
\left(\pi(v),\exp^g_{\pi(v)}(v/1-||v||_g)\right) & \text{ if } ||v||_g<1\\
\left(\pi(v),\gamma_v(\infty)\right) & \text{ if } ||v||_g=1,
\end{cases}
\eeq
where $\gamma_v(\infty)$ is the point at infinity 
determined by the geodesic ray $\widetilde{M}$ with initial
velocity $v$.

Hence the collection of maps $\overline{\Upsilon}_z$, 
$z\in\Delta^{i+1}$,
determines a class
$[\overline{\Upsilon}]$
in $\pi_{i+1}\left(\overline{\textbf{R}}_{\D}^T(M,g),\overline{\textbf{R}}_{\D}^o(M,g)\right)
\simeq\pi_{i+1}\left(\overline{\textbf{R}}_{\D}^T(M,g)/\overline{\textbf{R}}_{\D}^o(M,g)\right)$ with the
property that
\beq
\mbox{int}_{\phi*}[\overline{\Upsilon}]
=[\Upsilon],
\eeq
The result now follows from equation (\ref{equation}) and
the commutativity of diagram (\ref{repsD}). 
\end{proof}
%%%%%%%%%%%%%%%%%%%%%%%%%%%%%%%%%%%%%%%%%%%%%%%%%%%%%
%%%%%%%%%%%%%%%%%%%%%%%%%%%%%%%%%%%%%%%%%%%%%%%%%%%%%
\section*{Appendix}\label{appendix}
Here we prove Lemma \ref{R^t} and Lemma \ref{the map S}, following ideas from \cite{BLTrans}.  

\begin{proof}[\textbf{Proof of Lemma \ref{R^t}}]
Consider the following composition
\begin{equation}\label{other maps}
\sigma:
\underline{\textbf{R}}^T(M)/\underline{\textbf{R}}^o(M)\rightarrow
\underline{\textbf{R}}^t(M)/\underline{\textbf{R}}^o(M)\rightarrow
\underline{\textbf{R}}^t(M)/\underline{\textbf{R}}^d(M)\rightarrow
\overline{\textbf{R}}^t(M)/\overline{\textbf{R}}^d(M)
\end{equation}
induced by the obvious maps.
We will prove that $\sigma$
is a homotopy equivalence. The rightmost map is a weak homotopy equivalence by
\cite[Proposition 1.4]{BLTrans}). The leftmost map is a 
weak equivalence by Kister-Mazur theorem. In fact, consider the fiber bundle 
$\calE(M)\to M$ whose fiber over $x\in M$ is the space 
$\mathcal{EMB}(\mathcal{U}_0,T_xM)$ of germs
of embeddings of $\mathcal{U}_0$ into $T_xM$ fixing the origin, for some neighbourhood 
$\mathcal{U}_0$ of the origin in $T_xM$.
Similarly we define the bundle $\calT(M)\to M$ with fiber the
space $\Top(T_xM,0)$ of self-homeomorphisms of $T_xM$ fixing the origin. 
There are obvious simplicial isomorphisms 
$\underline{\textbf{R}}^T(M)\to\textbf{S}\Gamma(\calT(M))$
and
$\underline{\textbf{R}}^t(M)\to\textbf{S}\Gamma(\calE(M))$ which make the following 
diagram commutative
\beq
\xymatrix{
\underline{\textbf{R}}^T(M)\ar[d]\ar[r]&\textbf{S}\Gamma(\calT(M))\ar[d]\\
\underline{\textbf{R}}^t(M)\ar[r]&\textbf{S}\Gamma(\calE(M)).
}
\eeq

It follows from Kister's theorem that the right vertical map is a weak homotopy
equivalence. To see this, note that the restriction of $\calT(M)\to\mathcal{E}(M)$ 
to an individual fiber is the map $\Top(\R^n,0)\to\mathcal{EMB}(\mathcal{U}_0,\R^n)$. This map factors as the
composite
$\Top(\R^n,0)\to\mbox{Emb}(\R^n,0)\to\mathcal{EMB}(\mathcal{U}_0,\R^n)$ 
where the middle term is the space of continuous
self-embeddings of $\R^n$ fixing the origin. Kister \cite{kister} showed that the first map is a homotopy equivalence. The second map
is also a homotopy equivalence as can be seen by first passing to the corresponding simplicial fibration between their 
singular complexes, and then changing PL-equivalences by continuous embeddings in  
\cite[Lemma 1.5. Part c) and e)]{lashof-kuiper} to show that the fiber is contractible.

An obstruction theory argument now shows that the map between section spaces is
also a weak homotopy equivalence (see for example \cite[Lemma 3.10]{BLAnnals}) . Therefore the map on the left is 
a weak homotopy equivalence.

It is now easy to see that the vertical left map induces the
desired weak homotopy equivalence
\beq
\underline{\textbf{R}}^T(M)/\underline{\textbf{R}}^o(M)
\to
\underline{\textbf{R}}^t(M)/\underline{\textbf{R}}^o(M)
\eeq

Finally, note that since $\On(n)$ is homotopy equivalent to $GL_n(\R)$, it also follows that 
$\underline{\textbf{R}}^o(M)\subset\underline{\textbf{R}}^d(M)$ is a weak homotopy equivalence and hence
so is the middle map in (\ref{other maps}). This completes the proof.
\end{proof}
%%%%%%%%%%%%%%%%%%%%%%%%%%%%%%%%%%%%%%%%%%%%%%%
%%%%%%%%%%%%%%%%%%%%%%%%%%%%%%%%%%%%%%%%%%%%%%%
%%%%%%%%%%%%%%%%%%%%%%%%%%%%%%%%%%%%%%%%%%%%%%%

We now prove Lemma \ref{the map S}.

Let $G$ be a topological group and $H$ a closed subgroup of $G$ such that
$G\to G/H$ has local cross sections. For example $G=\Top(\R^n,0)$ or $\Top(\D^n,0)$ and
$H=\On(n)$.

Let $\calB_0: X\to B$ be a fiber bundle with fiber $Y$ and structure group $H$. Suppose that action of
$H$ on $Y$ extends to an action of $G$ on $Y$,  so that we can also
we regard $\calB_0$ as a bundle with structure group $G$.

Let $\underline{\textbf{R}}^G$ be the simplicial set with $k$-simplices given by
commutative diagrams
\beq
\xymatrix
{&\Delta^k\times X\ar[rr]^-{F}\ar[rd] & &\Delta^k\times X\ar[ld]\\
&& \Delta^k\times B
}
\eeq
where $F$ is a $G$-equivalecnce, i.e. an isomorphism between bundles with structure group $G$.

Likewise one defines the simplicial set $\underline{\textbf{R}}^H$, and observe that there is an
inclusion $\underline{\textbf{R}}^H\subset\underline{\textbf{R}}^G$. Following Burghelea-Lashof
\cite[p.29]{BLTrans} and Steenrod \cite[p. 44-45]{Steenrod}, we define a simplicial map
\beq
\widehat{S}:\underline{\textbf{R}}^G/\underline{\textbf{R}}^H\to
\textbf{S}\Gamma\left(P_HX\times_HG/H\right),
\eeq
where $P_HX\to B$ is the principal $H$-bundle associated to $\calB_0$. The map is defined as follows:
let $\{V_j,\phi_j\}$ be a local coordinate system for $\calB_0$ and let $\{V_j,\phi_j'\}$ be the
corresponding local coordinate system for $P_HX\times_HG/H\to B$.

Given a $k$-simplex $F:\Delta^k\times X\to\Delta^k\times X$ of $\underline{\textbf{R}}^G$,  for each
$t\in\Delta^k$ we obtain
a map 
\beq
\phi_j^{-1}\circ F_t\circ\phi_j: V_j\times Y\to V_j\times Y
\eeq
which preserves the first component. Hence it is determined by some map
\beq
\lambda_j^t:V_j\to G,
\eeq
such that $\phi_j^{-1}\circ F_t\circ\phi_j(x,y)=(x,\lambda_j^t(x)y)$.

Let now $\rho:G\to G/H$ be the quotient map. Then the map
\beq
s:\Delta^k\times B\to \Delta^k\times P_HX\times_H\times G/H
\eeq
defined by $s(t,x)=\phi'_j(x,\rho(\lambda_j^t(x)))$, $x\in V_j$ is a section of the bundle 
$\Delta^k\times (P_HX\times_H\times G/H)\to\Delta^k\times B$, and hence a $k$-simplex of 
$\textbf{S}\Gamma\left(P_HX\times_HG/H\right)$. This gives a map 
$\underline{\textbf{R}}^G\to\textbf{S}\Gamma\left(P_HX\times_HG/H\right)$ which is independent of
choices of $\{V_j,\phi_j\}$ and
which induces the desired map
\beq
\widehat{S}:\underline{\textbf{R}}^G/\underline{\textbf{R}}^H
\to
\textbf{S}\Gamma\left(P_HX\times_HG/H\right).
\eeq
\begin{propo}\label{proposition}
The map $\widehat{S}$ sends the $0$-simplicies of $\underline{\textbf{R}}^G/\underline{\textbf{R}}^H$
into a union, say $\Gamma_0\left(P_HX\times_HG/H\right)$, of path components of 
$\Gamma\left(P_HX\times_HG/H\right)$. Furthermore 
$\widehat{S}:\underline{\textbf{R}}^G/\underline{\textbf{R}}^H
\to
\textbf{S}\Gamma_0\left(P_HX\times_HG/H\right)$ is a simplicial isomorphism.
\end{propo}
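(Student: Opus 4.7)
The strategy is to interpret $\widehat{S}$ as the classical bijection between $H$-reductions of a principal $G$-bundle and sections of its associated $G/H$-bundle, and to construct an explicit simplicial inverse on the image components.

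Write $P_G X := P_H X \times_H G$ for the principal $G$-bundle associated to $\calB_0$ via the inclusion $H\hookrightarrow G$. A $G$-bundle automorphism $F$ of $X$ induces a $G$-bundle automorphism of $P_G X$, which pushes the canonical $H$-reduction $P_H X\subset P_G X$ forward to an $H$-reduction $F_{\ast}(P_H X)$, and two such $F, F'$ differ by a simplex of $\underline{\textbf{R}}^H$ precisely when they yield the same $H$-reduction. The local formula $s(x)=\phi'_j(x,\rho(\lambda_j(x)))$ defining $\widehat{S}$ is exactly the standard identification of $H$-reductions of $P_G X$ with sections of $P_G X/H = P_H X\times_H G/H$. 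Consequently, a $0$-simplex section $s$ lies in the image of $\widehat{S}$ if and only if its associated $H$-reduction is $H$-isomorphic to $P_H X$ itself.

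To establish the first assertion, I would observe that a path of sections $s_r$ ($r\in[0,1]$) determines an $H$-reduction of the pullback of $P_G X$ over $B\times[0,1]$; the deformation retraction $B\times[0,1]\searrow B\times\{0\}$ shows that the $H$-reductions at $r=0$ and $r=1$ are $H$-isomorphic. Hence the property ``$H$-isomorphic to $P_HX$'' is locally constant on $\Gamma(P_HX\times_HG/H)$, so the image of $\widehat{S}$ on $0$-simplices is a union $\Gamma_0$ of path components. For the simplicial isomorphism, I would construct an inverse $\widehat{T}: \textbf{S}\Gamma_0 \to \underline{\textbf{R}}^G/\underline{\textbf{R}}^H$ as follows. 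Given a $k$-simplex $s$, write it in local coordinates $\{V_j,\phi'_j\}$ as maps $\mu_j^t:V_j\to G/H$ and use the hypothesis that $\rho:G\to G/H$ admits local cross sections to produce local lifts $\lambda_j^t:V_j\to G$. Because $s\in\Gamma_0$, the associated family of $H$-reductions of $P_GX\times\Delta^k$ is $H$-isomorphic to the trivial family $P_HX\times\Delta^k$; fixing such an $H$-isomorphism allows one to adjust the $\lambda_j^t$ by right multiplication by $H$-valued functions so that $\lambda_j^t = g_{ij}\lambda_i^t g_{ij}^{-1}$ on every overlap, where $g_{ij}:V_i\cap V_j\to H$ are the transition functions of $\calB_0$. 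The local pieces $(t,x,y)\mapsto(t,x,\lambda_j^t(x)y)$ then patch to a global $G$-bundle automorphism $F$, and the remaining lifting freedom is exactly right multiplication by a simplex of $\underline{\textbf{R}}^H$, so $[F]\in\underline{\textbf{R}}^G/\underline{\textbf{R}}^H$ is well-defined. A direct comparison with the formula for $\widehat{S}$ gives $\widehat{S}\circ\widehat{T}=\mathrm{id}$ and $\widehat{T}\circ\widehat{S}=\mathrm{id}$.

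The main obstacle is the overlap adjustment in the construction of $\widehat{T}$: to promote arbitrary local lifts to ones satisfying the conjugation cocycle identity requires precisely the hypothesis $s\in\Gamma_0$, since for sections outside this subset there is a genuine obstruction in the nonabelian cohomology $H^1(B;H)$ preventing the patching. This obstruction is exactly what distinguishes $\Gamma_0$ from the remaining path components of $\Gamma(P_HX\times_HG/H)$, and tracking it carefully simplex by simplex is the technical heart of the proof.
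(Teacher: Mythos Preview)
Your proposal is correct and follows essentially the same route as the paper. Both arguments rest on the classical bijection between $H$-reductions of the principal $G$-bundle $P_GX=P_HX\times_HG$ and sections of $P_HX\times_HG/H$; the paper records this as a separate lemma (citing Steenrod) together with the observation that the composite $B\xrightarrow{s}P_HX\times_HG/H\to BH$ classifies the corresponding $H$-bundle, and then uses homotopic classifying maps to conclude that path-connected sections yield $H$-isomorphic reductions. You reach the same conclusion directly via the covering homotopy theorem over $B\times[0,1]$, which is an equivalent device. For the simplicial isomorphism on higher simplices the paper simply says ``the second part follows from a similar argument'' (i.e.\ the $0$-simplex argument run parametrically over $\Delta^k$), whereas you spell out an explicit inverse $\widehat{T}$; your patching step, adjusting local lifts $\lambda_j^t$ by $H$-valued functions so that they satisfy the conjugation cocycle for the fixed transition data $g_{ij}$, is exactly what the paper's terse sentence is hiding. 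One small point worth making explicit in your write-up: $\widehat{T}$ is a simplicial map because any two choices of the auxiliary $H$-isomorphism differ by a simplex of $\underline{\textbf{R}}^H$, so compatibility with face and degeneracy maps holds automatically in the quotient.
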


Assuming this proposition, the proof of Lemma \ref{the map S} is straightforward:
\begin{proof}[\textbf{Proof of Lemma \ref{the map S}}]
Set $G=\Top(\R^n,0)$ and $H=\On(n)$ in the previous proposition. Then compose $\widehat{S}$ with the map
$\Gamma(P_H\times_HG/H)\to\Gamma(\calB_n(M))$, induced by the inclusion $G\to\Top(\R^n)=\Top(n)$.
\end{proof}

\begin{rmk}\label{rmk3}
Now set $G=\Top(\D^n,0)$ and $H=\On(n)$ in the previous proposition and compose $\widehat{S}$ with the map
$\Gamma(P_H\times_HG/H)\to\Gamma(\calB_n^{\D}(M))$, induced by the inclusion $G\to\Top(\D^n)$.
This gives the
map $S_{\D}$ from Section \ref{proofLemma1}.
\end{rmk}

In order to prove Proposition \ref{proposition} we restate the theorem in
\cite[p. 45]{Steenrod}. First regard $\calB_0$ as a $Y$-bundle $\calB: X\xrightarrow{\pi} B$
with structure group $G$. An $(H,Y)$-bundle structure on $\calB$ is an equivalence class of
pairs $[(\calB_1,F_1)]$
\begin{itemize}
\item $\calB_1$ is a bundle $X_1\xrightarrow{\pi_1} B$ with fiber $Y$ and structure group $H$.
\item $F_1:X_1\to X$ is a $G$-equivalence over $B$.
\item Two pairs $(\calB_1,F_1)$ and $(\calB_2,F_2)$ are equivalent if and only if there exists
an $H$-equivalence $\calF:X_1\to X_2$ over $B$ such that $F_2\circ\calF = F_1$.
\end{itemize}

Let $\calS_{(H,Y)}(\calB)$ denote the set of equivalence classes of $(H,Y)$-bundle structures on
$\calB$. Denote by $\pi':P_HX\times_{H}G/H\to B$ the bundle projection. For each equivalence class
$[(\calB_1,F_1)]$, note that $F_1:X_1\to X$ 
determines a collection of maps $\{\lambda_j:V_j\to G\}$. Together, they determine a map
$B\to P_HX\times_HG/H$
\beq
x\mapsto\phi_j'(x,\rho(\lambda_j(x))),
\eeq
for $x\in V_j$, which is a section
of the bundle $P_HX\times_H G/H$. Thus we have a map 
\beq
\calS:\calS_{(H,Y)}(\calB)\to\Gamma(P_HX\times_HG/H).
\eeq

Let $q: EH\to BH$ be the universal $H$-bundle and consider the $H$-bundle map
$\bar{h}_0:P_HX\to EH$ covering a map $h_0:B\to BH$ which classifies the bundle 
$P_HX\to B$. Identify $EH$ and $EG$ so that $G$ acts on $EH$ on the right with orbit space
$BG$ and let
$\hat{h}_0:P_HX\times_H G/H\to BH$ be given by $[x,\rho(\alpha)]\mapsto q(\bar{h}_0(x)\cdot\alpha)$,
for $(x,\alpha)\in P_HX\times G$.

\begin{rmk}
$\hat{h}_0:P_HX\times_H G/H\to BH$ is a $G/H$-bundle map over $i\circ h_0:B\to BG$
where $i:BH\to BG$ is the quotient map.
\end{rmk}
\begin{lemma}\label{lemma}
The following holds:
\begin{enumerate}
\item $\calS$ is a bijection.
\item $B\xrightarrow{\calS([\calB_1,F_1])} P_H\times_H G/H\xrightarrow{\hat{h}_0} BH$ is a classifying map for $\calB_1$.
\end{enumerate}
\end{lemma}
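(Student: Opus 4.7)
The plan is to prove both parts by unpacking the classical Steenrod correspondence between reductions of structure group and sections of an associated $G/H$-bundle, working throughout in the local coordinate system $\{V_j,\phi_j\}$ that was fixed in the construction of $\widehat S$. Let $\{g_{ij}:V_i\cap V_j\to G\}$ be the transition cocycle of $\calB$ with respect to this atlas, and let $\{\phi_j'\}$ be the corresponding coordinates on $P_HX\times_HG/H$, whose transition functions are $g_{ij}\bmod H$.

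For the surjectivity half of part 1, given a section $s$ of $P_HX\times_HG/H\to B$, I would use a local cross section of $\rho:G\to G/H$ over each chart to lift $s$ to maps $\lambda_j:V_j\to G$ satisfying $s(x)=\phi_j'(x,\rho(\lambda_j(x)))$. The compatibility $s|_{V_i\cap V_j}$ forces
\[
h_{ij}(x):=\lambda_i(x)^{-1}\,g_{ij}(x)\,\lambda_j(x)\in H
\]
on $V_i\cap V_j$. The cocycle $\{h_{ij}\}$ defines a principal $H$-bundle and hence an $H$-structured $Y$-bundle $\calB_1$, and multiplying coordinates by $\lambda_j$ fiberwise gives a $G$-equivalence $F_1:X_1\to X$ whose image under $\calS$ is visibly $s$. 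For injectivity, if $(\calB_1,F_1)$ and $(\calB_2,F_2)$ both map to $s$, the local data satisfy $\lambda_j^{(1)}(x)^{-1}\lambda_j^{(2)}(x)\in H$, and patching these $H$-valued functions through the transition cocycles produces the required $H$-equivalence $\calF:X_1\to X_2$ with $F_2\circ\calF=F_1$; choosing different $\lambda_j$'s in the construction of the preimage changes $(\calB_1,F_1)$ only by such an $\calF$, so everything is well defined on equivalence classes.

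For part 2, observe that the $\{h_{ij}\}$ above are precisely a transition cocycle for the principal $H$-bundle $P_HX_1$. On the other hand, by its definition, $\hat h_0\circ\calS([\calB_1,F_1])$ sends $x\in V_j$ to $q(\bar h_0(x)\cdot\lambda_j(x))$. I would compute the transition functions of the pullback of the universal bundle $EH\to BH$ along this map: on $V_i\cap V_j$ one must pass from the representative $\bar h_0(x)\cdot\lambda_j(x)$ to $\bar h_0(x)\cdot\lambda_i(x)$ within the same $H$-orbit, and this requires right-multiplying by $\lambda_j(x)^{-1}g_{ij}(x)^{-1}\lambda_i(x)\cdot \text{(sign)}=h_{ij}(x)^{-1}$, recovering the cocycle $\{h_{ij}\}$. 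Since a principal $H$-bundle is determined up to isomorphism by its transition cocycle, the pullback is isomorphic to $P_HX_1$, establishing that the composition is a classifying map.

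The main technical obstacle is the careful chart-by-chart bookkeeping, especially: (i) showing the construction of $(\calB_1,F_1)$ from a section $s$ is independent of the chosen local cross sections of $\rho$ and of the atlas $\{V_j,\phi_j\}$, and (ii) correctly tracking the conventions for left vs.\ right actions of $H$ on $EH$ and on $P_HX$ so that the transition-cocycle calculation in part 2 produces $h_{ij}$ rather than $h_{ij}^{-1}$ or a conjugate; once these conventions are fixed consistently, the rest reduces to routine verification.
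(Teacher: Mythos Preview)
Your proposal is correct. For part 1, the paper simply cites Steenrod \cite[9.4, p.\,44--45]{Steenrod}; your argument is exactly the one Steenrod gives there, so the content is the same, just spelled out rather than referenced. For part 2 you take a different route from the paper. The paper exhibits an explicit $H$-bundle map $P_HX_1\to EH$ covering the composition $\hat h_0\circ\calS([\calB_1,F_1])$: it first observes that there is a natural $H$-bundle map $P_GX=P_HX\times_HG\to EH$, $[x,\alpha]\mapsto\bar h_0(x)\cdot\alpha$, covering $\hat h_0$, and then constructs an $H$-bundle map $P_HX_1\to P_GX$ over $\calS([\calB_1,F_1])$ by the local formula $(x,h)\mapsto(x,\lambda_j(x)h)$. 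You instead compute the transition cocycle of the pullback of $EH\to BH$ along $\hat h_0\circ\calS([\calB_1,F_1])$ and identify it with the cocycle $\{h_{ij}\}$ of $P_HX_1$. Both are standard ways to verify a classifying map; the paper's explicit bundle-map construction sidesteps the left/right convention bookkeeping you correctly flag as a hazard, while your cocycle comparison is more self-contained and does not use the identification $EH=EG$.

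One small notational slip to fix: in part 2 you write ``$\bar h_0(x)$'' for $x\in V_j\subset B$, but $\bar h_0$ is defined on $P_HX$, not on $B$. You should insert the local section $\sigma_j:V_j\to P_HX$ coming from the chart $\phi_j$ and write $\bar h_0(\sigma_j(x))$; the transition computation then goes through (up to the harmless $h_{ij}$ versus $h_{ji}$ ambiguity you already anticipate).
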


We now proceed to prove Prosposition \ref{proposition} assuming Lemma \ref{lemma}.

\begin{proof}[\textbf{Proof of Prosposition \ref{proposition}}]
For any 0-simplex $F:X\to X$ of $\underline{\textbf{R}}^G$ one has 
$\widehat{S}(F)=\calS[(\calB_0,F)]$. So by the first part of Lemma
\ref{lemma}, the map $\widehat{S}$ is monic on the $0$-simplices.
 Thus to show the first part of Proposition \ref{proposition}
it suffices to show that if $s_t$, $t\in [0,1]$ is a path in $\Gamma(P_HX\times_{H}G/H)$ such that
$s_0\in\mbox{im}(\widehat{S})$, then $s_1\in\mbox{im}(\widehat{S})$. 
In fact, suppose
that $\widehat{S}(F)=s_0$ by Lemma \ref{lemma} there exists
$[(\calB_1,F_1)]$ such that $\calS([\calB_1,F_1])=s_1$. 
By Part 2. of Lemma \ref{lemma}, since $s_1$ is path connected to $s_0$, there exists an
$H$-equivalence $h:\calB_1\to\calB_0$ (because their classifying maps are homotopic).
Hence $\widehat{S}(F_1\circ h^{-1})=\calS([\calB_0, F_1\circ h^{-1}])=\calS[\calB_1,F_1]=s_1$.

This shows the first part of the proposition. The second part follows from a similar argument.
\end{proof}

We now prove the Lemma \ref{lemma}.
\begin{proof}[\textbf{Proof of Lemma \ref{lemma}}]
The first part is the same as \cite[9.4, p. 44-45]{Steenrod}.
To show the second part, it suffices to find an $H$-bundle map $P_HX_1\to EH$ covering
$B\xrightarrow{\calS[\calB_1,F_1]}P_HX\times_H G/H\xrightarrow{\hat{h}_0} BH$,
where $P_HX_1$ is the associated principal $H$-bundle of $\calB_1: X_1\to B$.

Since there is a natural $H$-bundle map $P_GX=P_HX\times_HG\to EH$ given by
$[x,\alpha]\mapsto\bar{h}_0(x)\cdot\alpha$, covering $\hat{h}_0:P_HX\times_HG/H\to BH$; we 
only need to give an $H$-bundle map $P_HX_1\to P_GX$ covering the map
$B\xrightarrow{\calS[\calB_1,F_1]} P_HX\times_H G/H$.

This map is defined via the following commutative diagram
\beq
\xymatrix{
(\pi_1^H)^{-1}(V_j)\ar[r] & (\pi^G)^{-1}(V_j)\\
V_j\times H\ar[r]\ar[u]_{\varphi_j^H}   & V_j\times G\ar[u]_{\phi_j^G}
}
\eeq
where  $\{V_j,\phi_j^G\}$ is a local coordinate system for the principal $G$-bundle $\pi^G: P_GX\to B$, and $\{V_j,\phi_j^H\}$ is a local coordinate
system for the principal $H$-bundle $P_HX_1\xrightarrow{\pi_1^H} B$.
The lower horizontal map is given by $(x,h)\mapsto (x,\lambda_j(x)h)$ and
$\{\lambda_j:V_j\to G\}$ is induced by $F_1$. 
\end{proof}

%%%%%%%%%%%%%%%%%%%%%%%%%%%%%%%%%%%%%%%%%%%%%%%%%%%%%%%%%%%
%%%%%%%%%%%%%%%%%%%%%%%%%%%%%%%%%%%%%%%%%%%%%%%%%%%%%%%%%%%
%%%%%%%%%%%%%%%%%%%%%%%%%%%%%%%%%%%%%%%%%%%%%%%%%%%%%%%%%%%

\bibliographystyle{alpha} % style for bibliography
 {\footnotesize\bibliography{biblio}} 

\begin{thebibliography}{BLR08}

\bibitem[BFJ18]{BFJ-Teich}
Mauricio Bustamante, Francis~Thomas Farrell, and Yi~Jiang.
\newblock On negatively curved bundles with hyperbolic fibers outside the igusa
  stable range.
\newblock {\em Mathematische Annalen}, May 2018.

\bibitem[BH99]{bridson}
Martin~R. Bridson and Andr{\'e} Haefliger.
\newblock {\em Metric spaces of non-positive curvature}, volume 319 of {\em
  Grundlehren der Mathematischen Wissenschaften [Fundamental Principles of
  Mathematical Sciences]}.
\newblock Springer-Verlag, Berlin, 1999.

\bibitem[BK72]{BousfieldKan}
A.~K. Bousfield and D.~M. Kan.
\newblock {\em Homotopy limits, completions and localizations}.
\newblock Lecture Notes in Mathematics, Vol. 304. Springer-Verlag, Berlin-New
  York, 1972.

\bibitem[BL74]{BLTrans}
Dan Burghelea and Richard Lashof.
\newblock The homotopy type of the space of diffeomorphisms. {I}, {II}.
\newblock {\em Trans. Amer. Math. Soc.}, 196:1--36; ibid. 196\ (1974), 37--50,
  1974.

\bibitem[BL77]{BLAnnals}
D.~Burghelea and R.~Lashof.
\newblock Stability of concordances and the suspension homomorphism.
\newblock {\em Ann. of Math. (2)}, 105(3):449--472, 1977.

\bibitem[BL12]{Bartels}
Arthur Bartels and Wolfgang L\"uck.
\newblock The {B}orel conjecture for hyperbolic and {${\rm CAT}(0)$}-groups.
\newblock {\em Ann. of Math. (2)}, 175(2):631--689, 2012.

\bibitem[BLR08]{BLR}
Arthur Bartels, Wolfgang L\"uck, and Holger Reich.
\newblock The {$K$}-theoretic {F}arrell-{J}ones conjecture for hyperbolic
  groups.
\newblock {\em Invent. Math.}, 172(1):29--70, 2008.

\bibitem[Far02]{F-ICTP}
F.~T. Farrell.
\newblock The {B}orel conjecture.
\newblock In {\em Topology of high-dimensional manifolds, {N}o. 1, 2
  ({T}rieste, 2001)}, volume~9 of {\em ICTP Lect. Notes}, pages 225--298. Abdus
  Salam Int. Cent. Theoret. Phys., Trieste, 2002.

\bibitem[FH78]{FH78}
F.~T. Farrell and W.~C. Hsiang.
\newblock On the rational homotopy groups of the diffeomorphism groups of
  discs, spheres and aspherical manifolds.
\newblock In {\em Algebraic and geometric topology ({P}roc. {S}ympos. {P}ure
  {M}ath., {S}tanford {U}niv., {S}tanford, {C}alif., 1976), {P}art 1}, Proc.
  Sympos. Pure Math., XXXII, pages 325--337. Amer. Math. Soc., Providence,
  R.I., 1978.

\bibitem[FJ89]{FJ89}
F.~T. Farrell and L.~E. Jones.
\newblock Compact negatively curved manifolds (of dim [unk] 3,4) are
  topologically rigid.
\newblock {\em Proceedings of the National Academy of Sciences},
  86(10):3461--3463, 1989.

\bibitem[FJ93]{FJ93}
F.~T. Farrell and L.~E. Jones.
\newblock Topological rigidity for compact non-positively curved manifolds.
\newblock In {\em Differential geometry: {R}iemannian geometry ({L}os
  {A}ngeles, {CA}, 1990)}, volume~54 of {\em Proc. Sympos. Pure Math.}, pages
  229--274. Amer. Math. Soc., Providence, RI, 1993.

\bibitem[FO09]{FOAnnals}
F.~Thomas Farrell and Pedro Ontaneda.
\newblock The {T}eichm\"uller space of pinched negatively curved metrics on a
  hyperbolic manifold is not contractible.
\newblock {\em Ann. of Math. (2)}, 170(1):45--65, 2009.

\bibitem[FO10a]{FO10}
F.~Thomas Farrell and Pedro Ontaneda.
\newblock On the topology of the space of negatively curved metrics.
\newblock {\em J. Differential Geom.}, 86(2):273--301, 2010.

\bibitem[FO10b]{FOGAFA}
Tom Farrell and Pedro Ontaneda.
\newblock Teichm\"uller spaces and negatively curved fiber bundles.
\newblock {\em Geom. Funct. Anal.}, 20(6):1397--1430, 2010.

\bibitem[FO15]{FO15}
F.~T. Farrell and P.~Ontaneda.
\newblock The space of nonpositively curved metrics of a negatively curved
  manifold.
\newblock {\em J. Differential Geom.}, 99(2):285--311, 2015.

\bibitem[Gro87]{gromov}
M.~Gromov.
\newblock Hyperbolic groups.
\newblock In {\em Essays in group theory}, volume~8 of {\em Math. Sci. Res.
  Inst. Publ.}, pages 75--263. Springer, New York, 1987.

\bibitem[Igu88]{Igusa-stability}
Kiyoshi Igusa.
\newblock The stability theorem for smooth pseudoisotopies.
\newblock {\em $K$-Theory}, 2(1-2):vi+355, 1988.

\bibitem[Igu02]{Igusa-book}
Kiyoshi Igusa.
\newblock {\em Higher {F}ranz-{R}eidemeister torsion}, volume~31 of {\em AMS/IP
  Studies in Advanced Mathematics}.
\newblock American Mathematical Society, Providence, RI; International Press,
  Somerville, MA, 2002.

\bibitem[Kis64]{kister}
J.~M. Kister.
\newblock Microbundles are fibre bundles.
\newblock {\em Ann. of Math. (2)}, 80:190--199, 1964.

\bibitem[KL66]{lashof-kuiper}
N.~H. Kuiper and R.~K. Lashof.
\newblock Microbundles and bundles. {II}. {S}emisimplical theory.
\newblock {\em Invent. Math.}, 1:243--259, 1966.

\bibitem[Kli74]{klin}
Wilhelm Klingenberg.
\newblock Riemannian manifolds with geodesic flow of {A}nosov type.
\newblock {\em Ann. of Math. (2)}, 99:1--13, 1974.

\bibitem[KM63]{kervaire-milnor}
Michel~A. Kervaire and John~W. Milnor.
\newblock Groups of homotopy spheres. {I}.
\newblock {\em Ann. of Math. (2)}, 77:504--537, 1963.

\bibitem[KS77]{KS}
Robion~C. Kirby and Laurence~C. Siebenmann.
\newblock {\em Foundational essays on topological manifolds, smoothings, and
  triangulations}.
\newblock Princeton University Press, Princeton, N.J.; University of Tokyo
  Press, Tokyo, 1977.
\newblock With notes by John Milnor and Michael Atiyah, Annals of Mathematics
  Studies, No. 88.

\bibitem[Lle85]{Llerena}
Irene Llerena.
\newblock Localization of fibrations with nilpotent fibre.
\newblock {\em Math. Z.}, 188(3):397--410, 1985.

\bibitem[Mil64]{milnor-micro}
J.~Milnor.
\newblock Microbundles. {I}.
\newblock {\em Topology}, 3(suppl. 1):53--80, 1964.

\bibitem[Ml87]{Moller}
Jesper~Michael M\o~ller.
\newblock Nilpotent spaces of sections.
\newblock {\em Trans. Amer. Math. Soc.}, 303(2):733--741, 1987.

\bibitem[MS74]{milnor-stasheff}
John~W. Milnor and James~D. Stasheff.
\newblock {\em Characteristic classes}.
\newblock Princeton University Press, Princeton, N. J.; University of Tokyo
  Press, Tokyo, 1974.
\newblock Annals of Mathematics Studies, No. 76.

\bibitem[Rug94]{ruggiero-expansive}
Rafael~O. Ruggiero.
\newblock Expansive dynamics and hyperbolic geometry.
\newblock {\em Bol. Soc. Brasil. Mat. (N.S.)}, 25(2):139--172, 1994.

\bibitem[Rug03]{ruggiero-asterisque}
Rafael~Oswaldo Ruggiero.
\newblock On the divergence of geodesic rays in manifolds without conjugate
  points, dynamics of the geodesic flow and global geometry.
\newblock {\em Ast\'erisque}, (287):xx, 231--249, 2003.
\newblock Geometric methods in dynamics. II.

\bibitem[Ste51]{Steenrod}
Norman Steenrod.
\newblock {\em The {T}opology of {F}ibre {B}undles}.
\newblock Princeton Mathematical Series, vol. 14. Princeton University Press,
  Princeton, N. J., 1951.

\bibitem[Wal78]{waldhausen}
Friedhelm Waldhausen.
\newblock Algebraic {$K$}-theory of topological spaces. {I}.
\newblock In {\em Algebraic and geometric topology ({P}roc. {S}ympos. {P}ure
  {M}ath., {S}tanford {U}niv., {S}tanford, {C}alif., 1976), {P}art 1}, Proc.
  Sympos. Pure Math., XXXII, pages 35--60. Amer. Math. Soc., Providence, R.I.,
  1978.

\end{thebibliography}
\Addresses

\end{document}